\documentclass[letterpaper, 11pt]{article} 
\usepackage{graphics,graphicx}
\usepackage{parskip}
\usepackage{amsmath}
\usepackage{amsthm}
\usepackage{mathtools}
\usepackage[english]{babel}
\usepackage[utf8]{inputenc}
\usepackage[title]{appendix}
\usepackage{amssymb} 
\usepackage[font=footnotesize,labelfont=small]{caption}
\captionsetup{width=0.85\linewidth}
\RequirePackage{geometry}
\geometry{margin=2cm}

\newtheorem{theorem}{Theorem}[section]
\newtheorem{lemma}[theorem]{Lemma}
\newtheorem{proposition}[theorem]{Proposition}
\newtheorem{corollary}[theorem]{Corollary}

\newtheorem{remark}{Remark}

\makeatletter
\DeclareRobustCommand\widecheck[1]{{\mathpalette\@widecheck{#1}}}
\def\@widecheck#1#2{%
    \setbox\z@\hbox{\m@th$#1#2$}%
    \setbox\tw@\hbox{\m@th$#1%
      \widehat{%
          \vrule\@width\z@\@height\ht\z@
          \vrule\@height\z@\@width\wd\z@}$}%
    \dp\tw@-\ht\z@
    \@tempdima\ht\z@ \advance\@tempdima2\ht\tw@ \divide\@tempdima\thr@@
    \setbox\tw@\hbox{%
      \raise\@tempdima\hbox{\scalebox{1}[-1]{\lower\@tempdima\box
\tw@}}}%
    {\ooalign{\box\tw@ \cr \box\z@}}}
\makeatother

\newcommand{\ep}{\epsilon}
\newcommand{\om}{\omega}

\DeclarePairedDelimiter\floor{\lfloor}{\rfloor}

\DeclareMathOperator{\Res}{Res}

\DeclareMathOperator{\sinc}{sinc}

\newcommand{\ds}{\displaystyle}

\newcommand{\be}{\begin{equation}}
\newcommand{\ee}{\end{equation}}
\newcommand{\bes}{\begin{equation*}}
\newcommand{\ees}{\end{equation*}}
\newcommand{\mand}{\quad \text{and}\quad}

\newcommand{\R}{{\bf{R}}}
\newcommand{\E}{{\mathbb{E}}}

\newcommand{\Z}{{\bf{Z}}}

\renewcommand{\L}{{\mathcal{L}}}

\newcommand{\A}{{\mathcal{A}}}

\renewcommand{\S}{{\mathcal{S}}}

\renewcommand{\tilde}{\widetilde}

\renewcommand{\check}{\widecheck}

\setlength{\parskip}{0.2cm}
\setlength{\parindent}{0pt}
\newcommand{\norm}[1]{\|#1\|}

\numberwithin{equation}{section}
 \usepackage[pagewise]{lineno}

\title{Using Random Walks to Establish Wavelike Behavior in an FPUT System with Random Coefficients }
\author{Joshua A. McGinnis, J. Douglas Wright}

\date{\today}
\begin{document}
\maketitle
\begin{abstract}
    We consider a linear Fermi-Pasta-Ulam-Tsingou lattice with random spatially varying material coefficients. Using the methods of stochastic homogenization we show that solutions with long wave initial data converge in an appropriate sense to solutions of a wave equation. The convergence is strong and both almost sure and in expectation, but the rate is quite slow. The technique combines energy estimates with powerful classical results about random walks, specifically the law of the iterated logarithm.
\end{abstract}

\section{Introduction}
\label{sec:Intro}
We prove an almost sure convergence result for solutions of the following one-dimensional random polymer linear Fermi-Pasta-Ulam-Tsingou (FPUT) lattice in the long wave limit:
\begin{align}\label{RFPUT}
m(j)\ddot{u}(j) = k(j)\left[u(j+1)-u(j)\right]-k(j-1)\left[u(j)-u(j-1)\right].
\end{align}
Here $j \in \Z$, $u=u(j,t) \in \R$ and $t \in \R$.
We choose the coefficients $m(j)$  (which we refer to as ``the masses'') to be independent and identically distributed (i.i.d.) random variables contained almost surely in some intervals $[a_m,b_m] \subset \R^+$ 
with standard deviation $\sigma_m$. We similarly take the coefficients $1/k(j)$ (``the springs'')
to be i.i.d. with support in $[a_k,b_k] \subset \R^+$ and deviation $\sigma_k$.
This system is well-understood when these coefficients are either constant or periodic with respect to $j$ 
\cite{Mielke}, but for the random problem most of what is known is formal or numerical \cite{Porter,Okada}.

For initial conditions whose wavelength is  $O(1/\ep)$, with $\ep \in (0,1)$, we prove that the $\ell^2$ norm of the difference between true solutions and appropriately scaled solutions to the wave equation is at most $O\left(\sqrt{\log\log(1/\epsilon)}\right)$ 
for times of $O(1/\ep)$ for almost every realization. While such an absolute error diverges
as $\ep \to 0^+$, it happens that this is 
 enough to establish an almost sure convergence result within the ``coarse-graining'' setting 
 used in \cite{Mielke} to study the (multi-dimensional) periodic problem. 
In addition to the almost sure convergence, we are able to provide estimates on the mean of the error in terms of $\sigma_m$ and $\sigma_k$ and prove convergence in mean.

The articles \cite{Chirilius-Bruckner,Wright} study the nonlinear FPUT lattice with periodic coefficients.
These show that soliton-like solutions exist for very large time scales using Korteweg-de Vries (KdV) approximations.
The authors of \cite{Wright} used the so called multiscale method of homogenization, a 
by-now classical tool with a long history in PDE for deriving effective equations, see \cite{Cioranescu}. In this paper, we carry out a very similar approach in deriving and proving the results; however, our expansions only result in an effective wave equation, not the KdV equation. In our setting, since the coefficients are random, it is necessary to average over the entire lattice. The law of large numbers implies this average is equal to the expectation, so the speed of the approximate solution depends on the expectation of the random variables. The probability theory hinges upon classical but extremely powerful asymptotic analysis of random walks, namely the law of iterated logarithms, as well as basic martingale theory. 

We denote a doubly infinite sequence $\{x(j)\}_{j \in \Z}$ by $x$. Let $S^{\pm}$ be the shift operators which act on sequences $f = \{f(j)\}$ as 
\begin{equation*}(S^{\pm}f)(j) \coloneqq f(j\pm1), \end{equation*}
and the operators $\delta^+$ and $\delta^-$, the left and right difference operators, are 
\begin{equation*} \begin{aligned}(\delta^+f)(j) &:=f(j+1)-f(j) \\ (\delta^-f)(j) &:= f(j) -f(j-1). \end{aligned}  \end{equation*}
Defining 
\begin{equation*}\begin{aligned} r& \coloneqq \delta^+u \\ p& \coloneqq \dot{u}, \end{aligned}\end{equation*}
we convert our second order equation (1.1) to the system 
\begin{equation}\label{FORFPUT}
\begin{aligned} 
    \dot{r} &= \delta^+p  \\ 
    \dot{p} &=\dfrac{1}{m}\delta^-(k r).
\end{aligned} 
\end{equation}
For the remainder of the paper, we work with \eqref{FORFPUT}. 

Here is the  idea of our ultimate result. Suppose that the initial conditions for \eqref{FORFPUT}
have the following long wave form:
$$
r(j,0) =\Phi(\ep j)/k(j) \mand p(j,0) = \Psi(\ep j)
$$
where $\Phi,\Psi:\R \to \R$ are suitably smooth, of somewhat rapid decay, and 
$\ep \in (0,1)$.
Then
the solution $p$ of \eqref{FORFPUT} has
$$
\L [p](X/\ep,\tau/\ep) \longrightarrow P_0(X,\tau)
$$
as $\ep \to 0^+$
where $P_0$ solves the wave equation
$\partial_\tau^2 P_0 = c^2 \partial_X^2 P_0$. The operator $\L$ 
interpolates the sequence $p$ into a function on $\R$. It is defined below, as is the wave speed $c$.
The convergence is strong in $L^2(\R)$ and is both almost sure and in expectation. A similar convergence
holds for $r$.

The paper is organized as follows. We carry out the multiscale expansion in Section \ref{homog section} and derive effective equations and approximate solutions. In Section \ref{prob est sec} we dive into the analysis of various smooth, rapidly decaying functions which are sampled at integers and multiplied componentwise by random walks. These estimates are necessary to control the error and here is where most the probability theory is needed. In Section \ref{Err est sec} we provide the rigorous estimates of the error. We introduce coarse-graining and prove the convergence results in Section \ref{Coarse grain sec}. In Section \ref{conc} we provide numerical simulations as evidence that our estimates are good ones i.e. they are not vast overestimates.


\section{Homogenization and derivation of the effect wave equation}\label{homog section}
In this section we homogenize the equation following closely what is done in \cite{Wright}. First, we define ``residuals'', which quantify how close some function is to a true solution. For any functions 
$\tilde{r}(j,t)$ and $\tilde{p}(j,t)$ put
\begin{equation} \label{res def}
\begin{aligned} \text{Res}_1(\tilde{r}, \tilde{p}) &:= \delta^+\tilde{p} -\partial_t\tilde{r}
\\ \ \text{Res}_2(\tilde{r}, \tilde{p}) &:=\dfrac{1}{m}\delta^-(k\tilde{r})-\partial_t\tilde{p}.\end{aligned} \end{equation} 
We look for approximate long wave solutions of the form
\begin{equation}\label{ansatz1}
\begin{aligned} \tilde{r}(j,t) &= \tilde{r}_\epsilon(j,t) : = R(j,\epsilon j,\epsilon t) \\ \ \tilde{p}(j,t) &= \tilde{p}_\epsilon (j,t): = P(j,\epsilon j,\epsilon t) , \end{aligned} \end{equation} where $R=R(j,X,\tau)$ and $P=P(j,X,\tau)$ are maps 
\begin{equation*}\Z \times \R \times \R \to \R. \end{equation*} 
 In the periodic-coefficient problem studied in \cite{Wright}, it was necessary to assume that these functions are periodic in the $\Z$ slot, but this needs to be exchanged in the random case. Here,  we make a ``sublinear growth'' assumption that makes averaging possible:
 \begin{equation}\label{limits}
 \lim\limits_{|j| \to \infty} R(j,X,\tau)/j   = \lim\limits_{|j| \to \infty} P(j,X,\tau)/j =0.   \end{equation} 
 The following lemma is crucial to the derivation of the effective equations. 

\begin{lemma}\label{solve}
There exists an $f = \left\{ f(j)\right\}_{j \in \Z}$ satisfying both 
\begin{equation}\label{fast decay}
\lim\limits_{|j| \to \infty} f(j)/j \to 0 \end{equation}
and 
\begin{equation*}(\delta^\pm f )(j)= g(j) \end{equation*}
if and only if 
\begin{equation*} \lim\limits_{N \to \infty} \dfrac{1}{N} \sum\limits_{i = 0}^{N-1} g(i) = 
\lim\limits_{N \to \infty} \dfrac{1}{N} \sum\limits_{i = 1}^{N} g(-i) = 0.\end{equation*} 
\end{lemma}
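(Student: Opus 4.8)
The plan is to prove the two directions separately, and to treat the $\delta^+$ and $\delta^-$ cases together since they differ only by a relabeling. First observe that solving $(\delta^+ f)(j) = g(j)$, i.e. $f(j+1) - f(j) = g(j)$, is just telescoping: fixing $f(0)$ arbitrarily (say $f(0)=0$), we are forced to take
\[
f(N) = \sum_{i=0}^{N-1} g(i) \quad (N \geq 1), \qquad f(-N) = -\sum_{i=1}^{N} g(-i) \quad (N \geq 1).
\]
So the sequence $f$ exists and is \emph{unique up to an additive constant}, with no constraint on $g$ needed for mere existence; the entire content of the lemma is that the sublinear growth condition \eqref{fast decay} on this $f$ is equivalent to the two Ces\`aro-type limits on $g$ vanishing. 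The $\delta^-$ case is identical after noting $(\delta^- f)(j) = g(j)$ is the same as $(\delta^+ f)(j-1) = g(j)$, i.e. $f(j) - f(j-1) = g(j)$, which telescopes in exactly the same way to $f(N) = \sum_{i=1}^N g(i)$, $f(-N) = -\sum_{i=0}^{N-1} g(-i)$.

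For the ``only if'' direction: given that some $f$ with $f(j)/j \to 0$ satisfies $f(j+1)-f(j) = g(j)$, we write, for $N \geq 1$,
\[
\frac{1}{N}\sum_{i=0}^{N-1} g(i) = \frac{f(N) - f(0)}{N} = \frac{f(N)}{N} - \frac{f(0)}{N} \longrightarrow 0
\]
as $N \to \infty$, directly by the hypothesis on $f$; the left-tail sum is handled the same way using $f(-N)$. For the ``if'' direction: assume the two limits vanish, define $f$ by the telescoping formula above (with $f(0)=0$), and we must show $f(N)/N \to 0$ and $f(-N)/N \to 0$. But $f(N)/N = \frac1N \sum_{i=0}^{N-1} g(i)$ is \emph{exactly} the quantity assumed to go to zero, and likewise $f(-N)/N = -\frac1N\sum_{i=1}^N g(-i) \to 0$. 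We should also check the values at negative-versus-positive indices match up into a single bona fide sequence on all of $\Z$ and that the limit along non-integer or two-sided scalings (if one insists on $|j|\to\infty$ rather than $N\to\infty$ through positive integers) still gives $0$ — but since $|j|\to\infty$ just means $j \to +\infty$ or $j\to-\infty$, the two one-sided statements we have proved are precisely what is required.

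The main (and essentially only) subtlety is bookkeeping rather than analysis: one must be careful that the indices in the two Ces\`aro sums in the statement ($\sum_{i=0}^{N-1} g(i)$ versus $\sum_{i=1}^{N} g(-i)$) are the ones that arise naturally from telescoping in each of the $\delta^+$ and $\delta^-$ conventions, and that shifting a sum by one index — e.g. passing between $\sum_{i=0}^{N-1}$ and $\sum_{i=1}^{N}$ — does not affect the Ces\`aro limit (the discrepancy is a single term $g(\pm N)$ divided by $N$, which vanishes because $g$ is itself $o(N)$, being a difference of the sublinear $f$). Once that is laid out the proof is a couple of lines in each direction; I would present it as: (i) reduce $\delta^-$ to $\delta^+$; (ii) telescoping formula and uniqueness-up-to-constant remark; (iii) the two-line ``only if'' computation; (iv) the two-line ``if'' computation; (v) a remark that the single-index shift is harmless.
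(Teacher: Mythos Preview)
Your proposal is correct and follows essentially the same approach as the paper: both reduce to the $\delta^+$ case, write down the telescoping formula $f(N)=\sum_{i=0}^{N-1}g(i)$, $f(-N)=-\sum_{i=1}^N g(-i)$ with $f(0)=0$, and then observe that the sublinear-growth condition on $f$ and the two Ces\`aro limits on $g$ are literally the same quantities. Your additional remarks (uniqueness up to an additive constant, and the harmlessness of a single index shift) are sound embellishments but not needed for the paper's version of the argument.
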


\begin{proof}
 We only give a proof for ``$\delta^+$''. $\Rightarrow$ Since $g(j) =f(j+1)-f(j)$ we get 
  \begin{equation*}\lim_{N \to \infty} \dfrac{1}{N}\sum\limits_{i=0}^{N-1}g(i) = \lim_{N \to \infty} \dfrac{1}{N}\sum\limits_{i=0}^{N-1}[f(i+1)-f(i)]=\lim_{N \to \infty}\dfrac{f(N)-f(0)}{N} =0\end{equation*} by assumption
  \eqref{fast decay}. Proof of the second such equality is analogous. 
  
$\Leftarrow$ If we choose $f(0)=0$, it is readily checked that
\begin{equation} \label{solve for chi} f(j)=\sum_{k=0}^{j-1}g(i) \mand f(-j)=-\sum_{k=1}^{j}g(-i) \end{equation}
 for $j>0$ solves $\delta^+f=g$.
Then 
  \begin{equation*}\lim\limits_{j \to \infty} \dfrac{f(j)}{j} = \lim\limits_{j \to \infty} \dfrac{\sum\limits_{i=0}^{j-1}g(i)}{j} =0.\end{equation*} It is likewise seen that \begin{equation*}\lim_{j \to \infty}\dfrac{f(-j)}{-j}=0 \end{equation*} by using the formula for $f(-j)$.   
\end{proof}
 
 Now we continue with the homogenization procedure. We must understand how $\delta^{\pm}$ act on functions of the type \eqref{ansatz1}.  The following expansions are found in \cite{Wright}.
 If $u(j) = U(j, \epsilon j),$ then 
 \begin{equation*}\delta^{\pm}u(j)= \sum\limits_{n \geq 0}\epsilon^n \delta_n^{\pm}U \end{equation*}
 where \begin{equation*} \delta_0^\pm := \check{\delta}^{\pm} \ \text{and} \ \delta_n^{\pm}:=\dfrac{(\pm 1)^{n+1}}{n!}\check{S}^{\pm} \partial_X^n.\end{equation*} Here $\check{\delta}^{\pm} $ and $\check{S}^{\pm}$ act only on the first slot; they are analogous to partial derivatives with respect to $j$. Precisely,\begin{equation*} \begin{aligned} 
 \check{S}^{+}(U)(j,X) &:= U(j+1,X), \\
 \check{S}^{-}(U)(j,X) &:= U(j-1,X), \\
 \check{\delta}^{+}(U)(j,X) &:= U(j+1,X) -U(j,X),  \ \\
 \check{\delta}^{-}(U)(j,X) &:= U(j,X) -U(j-1,X).
 \end{aligned} 
 \end{equation*} 
Let 
 \begin{equation*} (E_M^{\pm}u)(j) : = (\delta^\pm u)(j) - \sum\limits_{n=0}^M\epsilon^n(\delta^\pm_n U)(j,\epsilon j) \end{equation*}
 be the error made by truncating the series expansion of $\delta^\pm u$ after $M$ terms. Thus the lowest power of $\epsilon$ we see in the error term is  $\epsilon^{M+1}$. 
 
We further assume that our approximate solutions $R$ and $P$ themselves have expansions in $\epsilon$:
\be\label{expano}
R(j,X,\tau) = R_0(j,X,\tau)+\ep R_1(j,X,\tau) \mand
P(j,X,\tau) = P_0(j,X,\tau)+\ep P_1(j,X,\tau).
\ee
Of course $R_i(j,X,\tau)$  and $P_i(j,X,\tau)$ meet \eqref{fast decay}. 
Using the above expansion, we directly compute $\text{Res}_1(\tilde{r}_\epsilon,\tilde{p}_\epsilon)$:
\begin{equation}\label{res1v1}
\begin{aligned}\text{Res}_1(\tilde{r}_\epsilon, \tilde{p}_\epsilon ) &=   \delta_0^+P_0 +\epsilon \delta^+_1P_0 +E_1^+(P_0)  \\ &+\epsilon \delta_0^+P_1 +\epsilon^2 \delta^+_1P_1 +\epsilon E_1^+(P_1) \\ &-\epsilon \partial_\tau R_0 - \epsilon^2 \partial_\tau R_1.  \end{aligned}  \end{equation} 
Here we have used the expansion for $\delta^+$. Similarly 
\begin{equation}\label{res2v1}\begin{aligned} \text{Res}_2(\tilde{r}_\epsilon, \tilde{p}_\epsilon ) &=  \frac{1}{m}(\delta_0^-kR_0 +\epsilon \delta^-_1kR_0 +E_1^-(kR_0)  \\ &+\epsilon \delta_0^-kR_1 +\epsilon^2\delta^-_1kR_1 +\epsilon E_1^-(kR_1) \\  &-\epsilon \partial_\tau P_0  - \epsilon^2 \partial_\tau P_1).\end{aligned}\end{equation} 

Next set 
\be\label{this is Q}
Q_i \coloneqq kR_i.
\ee We choose $P_0, P_1, Q_0$ and $Q_1$ so that the $O(1)$ and $O(\epsilon)$ terms in \eqref{res1v1} and \eqref{res2v1}
vanish.  We get
\begin{equation}\label{o1}\begin{aligned}
\check{\delta}^+P_0 &=0 \\ \dfrac{1}{m}\check{\delta}^-Q_0&=0 
\end{aligned} \tag{$O(1)$}\end{equation}
and
\begin{equation}\label{oep}\begin{aligned}
\check{\delta}^+P_1 &= \dfrac{1}{k}\partial_{\tau}Q_0-S^{+}\partial_XP_0 \\ \check{\delta}^-Q_1 &= m \partial_{\tau} P_0-S^{-1}\partial_XQ_0.
\end{aligned} \tag{$O(\epsilon)$} \end{equation}

From \eqref{o1} we learn that $P_0$ and $Q_0$ do not depend on $j$, i.e. 
\begin{equation}\label{noj}
P_0 (j,X,\tau)= \bar{P}_0(X,\tau) \ \text{and} \ Q_0 (j,X,\tau)= \bar{Q}_0(X,\tau). \end{equation} 
If there are to be solutions $P_1$ and $Q_1$ or \eqref{oep} which satsify \eqref{fast decay},
 Lemma 2.1 tells us we must have \begin{equation}\begin{aligned}\lim_{N \to \infty}
\dfrac{1}{N}\sum_{j=0}^{N-1}\left[\frac{1}{k(j)}\partial_{\tau}\bar{Q}_0-\partial_X\bar{P}_0\right] =\lim_{N \to \infty}
\dfrac{1}{N}\sum_{j=1}^{N}\left[\frac{1}{k(-j)}\partial_{\tau}\bar{Q}_0-\partial_X\bar{P}_0\right] =0 \\ \lim_{N \to \infty}
\dfrac{1}{N}\sum_{j=0}^{N-1} \left[m(j)\partial_{\tau} \bar{P}_0-\partial_X\bar{Q}_0\right]=
\lim_{N \to \infty}
\dfrac{1}{N}\sum_{j=1}^{N} \left[m(-j)\partial_{\tau} \bar{P}_0-\partial_X\bar{Q}_0\right]=0  .\end{aligned}\end{equation} 
Since $\bar{P}_0$ and $\bar{Q}_0$ do not depend upon $j$ these can be rewritten as
$$
\left[\lim_{N \to \infty} \dfrac{1}{N}\sum_{j=0}^{N-1}\frac{1}{k(j)} \right] \partial_{\tau}\bar{Q}_0 = 
\lim_{N \to \infty}\left[
\dfrac{1}{N}\sum_{j=1}^{N} \frac{1}{k(-j)}\right] \partial_{\tau}\bar{Q}_0=\partial_X \bar{P}_0
$$
and
$$
\left[\lim_{N \to \infty}
\dfrac{1}{N}\sum_{j=0}^{N-1} m(j)\right] \partial_\tau \bar{P}_0 = 
\left[\lim_{N \to \infty}
\dfrac{1}{N}\sum_{j=1}^{N} m(-j)\right] \partial_\tau \bar{P}_0 = \partial_X \bar{Q}_0.
$$

The law  of large numbers tells us that \begin{equation}\lim_{N \to \infty}\dfrac{1}{N}\sum_{j= 0}^{N-1}m(j)=\lim_{N \to \infty}\dfrac{1}{N}\sum_{j= 1}^{N}m(-j)=\E[m]=:\bar{m}\end{equation}
and
\begin{equation}
\lim_{N \to \infty}\frac{1}{N}\sum_{j=0}^{N-1} \frac{1}{k(j)} =\lim_{N \to \infty}\frac{1}{N}\sum_{j=0}^{N-1} \frac{1}{k(-j)}=\E\left[\frac{1}{k}\right]
=:\frac{1}{\tilde{k}
}
\end{equation} almost surely, since $m$ and $k$ are sequences of i.i.d. random variables. 
To be clear $\E[\cdot]$ is the expectation of a random variable. 
And so we find that
\begin{equation}\label{wave1}
\begin{aligned}   \partial_{\tau} \bar{Q}_0 &= \tilde{k}\partial_{X}\bar{P}_0  \\ \partial_{\tau}\bar{P}_0 &= \dfrac{1}{\bar{m}}\partial_{X}\bar{Q}_0. \end{aligned}  \end{equation} From this, out pops the effective wave equation
\begin{equation*}\partial^2_{\tau} \bar{Q}_0 = c^2\partial^2_X\bar{Q}_0\end{equation*}
with wave-speed
$$c:= \sqrt{{\tilde{k}/\bar{m}}}.$$
 
We can use d'Alemberts formula to get $\bar{Q}_0$ and subsequently find $\bar{P}_0$ from its relation to $\bar{Q}_0$:
\begin{equation}\label{dalembert}\begin{aligned}
\bar{Q}_0(X,\tau) &= A(X-c\tau)+B(X+c\tau) \\
 \bar{P}_0(X,\tau) &=\frac{1}{\sqrt{\tilde{k}\bar{m}}}(-A(X-c\tau) + B(X+c\tau)). \end{aligned}\end{equation} 
The functions  $A$ and $B$ will ultimately be determined by the initial conditions
for \eqref{FORFPUT} in a fashion that is consistent with \eqref{ansatz1}.

At this point we have computed the effective wave equation but  we must also determine 
the full form of ${P}_1$ and ${Q}_1$. Using \eqref{noj} and \eqref{wave1} in \eqref{oep} we get 
\begin{equation*}\begin{aligned} \check{\delta}^+P_1 &= \left(\dfrac{\tilde{k}}{k}-1\right)\partial_X \bar{P}_0 \\ \check{\delta}^- Q_1 &= \left(\dfrac{m}{\bar{m}}-1\right)\partial_X\bar{Q}_0.\end{aligned}\end{equation*}
Define $\chi_m$ and $\chi_k $ as the solutions to
\begin{equation*}\begin{aligned} \delta^+ \chi_k =\dfrac{\tilde{k}}{k}-1
\ \text{and} \
\delta^-\chi_m = \dfrac{m}{\bar{m}}-1. 
\end{aligned}\end{equation*}
Using formula \eqref{solve for chi} in Lemma \ref{solve}, we can solve explicitly for $\chi_k$ and $\chi_m.$ 
They are
\be\label{chis}\begin{aligned} 
\chi_k(j)&=\sum_{i=0}^{j-1}\left[\dfrac{\tilde{k}}{k(i)}-1\right] \ \text{and}\ \chi_k(-j)=\sum_{i=1}^{j}\left[1-\dfrac{\tilde{k}}{k(i)}\right] \\ \chi_m(j)&=\sum_{i=0}^{j-1}\left[\dfrac{m(i)}{\bar{m}}-1\right] \ \text{and}\ \chi_k(-j)=\sum_{i=1}^{j}\left[1-\dfrac{m(i)}{\bar{m}}\right].
\end{aligned}\ee
Observe that $\ds \dfrac{\tilde{k}}{k}-1$ and  $\dfrac{m}{\bar{m}}-1$ are mean zero random variables
and as such $\chi_k$ and $\chi_m$ are classical random walks.
 The expression for $Q_1$ and $P_1$ can be given in terms of $\chi_k$ and $\chi_m$:
\begin{equation}\label{P1Q1}
\begin{aligned} 
Q_1(j,X,\tau) &= \chi_m(j)\partial_{X}\bar{Q}_0(X,\tau)\\
P_1(j,X,\tau) &= \chi_k(j)\partial_{X}\bar{P}_0(X,\tau).
\end{aligned} \end{equation}
We need to know estimates for the norm of $P_1$ and $Q_1$ so that we can
estimate the residuals. Results are given in the next section.
Here is an important preview of what we find: the growth rates for random walks ultimately imply
that the terms $\ep P_1$ and $\ep R_1$ in \eqref{expano} are, despite appearances, not actually $O(\ep)$.\
This in turn implies that the residuals are not as small as their formal derivation (namely $O(\ep^2)$) would lead one to believe.
This is the main technical complication in this article and the key difference between the random problem we study here and the periodic or constant coefficient problems studied in \cite{Mielke}.

Before moving on, we now spell out our long wave approximation in detail. Putting together
\eqref{ansatz1},
\eqref{expano},
 \eqref{this is Q},
 \eqref{noj},
 \eqref{dalembert} and
 \eqref{P1Q1}
 we see that 
\be\label{ansatz2}
\begin{split}
\tilde{r}_\ep(j,t) &= 
{1 \over k(j)} \left(A(\ep(j-ct)) + B (\ep(j+ct))\right) + \ep{\chi_m(j) \over k(j)} \left(A'(\ep(j-ct)) + B' (\ep(j+ct)) \right)\\
\tilde{p}_\ep(j,t) &= 
{1 \over \sqrt{\tilde{k}\bar{m}} }\left(-A(\ep(j-ct)) + B (\ep(j+ct))\right) + \ep{\chi_k(j) \over \sqrt{\tilde{k}\bar{m}}} \left(-A'(\ep(j-ct)) + B' (\ep(j+ct)) \right).
\end{split}
\ee


\section{Probabilistic estimates}
\label{prob est sec}
In this section we provide tools which will allow us to compute the 
$\ell^{2}$ norms of the residuals for all $|t| \leq T_0/\epsilon$. The first subsection deals with almost sure and realization dependent estimates by making use of the the law of iterated logarithms (LIL). The second subsection provides estimates on the expectation of the norms using martingale inequalities.

\subsection{Almost Sure Estimates}
One can find the statement of the LIL in \cite{Durret} and more details can be found in \cite{Feller}. Here we present the theorem in a form convenient to us. 

\begin{theorem} (The Law of Iterated Logarithms)
\label{LIL}
Suppose $y(j)$ ($j \in \Z$) are i.i.d random variables with mean zero and $\E[y^2]=\sigma^2$. Define the (two-sided) random walk $\chi$
via 
\begin{equation}\label{rando}
\chi(j)\coloneqq\sum_{i=0}^{j-1}y(i) \mand \chi(-j)\coloneqq\sum_{i=1}^jy(-i)
\end{equation} 
for $j > 0$ and $\chi(0) = 0$.

Then \begin{equation*}\limsup_{|j| \to \pm \infty} \frac{\pm\chi(j)}{\sqrt{2|j| \log \log( |j|)}} \overset{a.s.}{=} \sigma .\end{equation*}
\end{theorem}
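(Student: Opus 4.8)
This is the classical Hartman--Wintner law of the iterated logarithm, stated here in two-sided form, and for the present paper it can simply be invoked from \cite{Durret,Feller}; since the increments that actually occur in our application---namely $\tilde k/k(i)-1$ and $m(i)/\bar m-1$---are bounded, one could equally quote Kolmogorov's version of the LIL, whose proof is somewhat lighter. In either case the two-sided statement reduces at once to a one-sided one. The families $\{y(i)\}_{i\ge 0}$ and $\{y(-i)\}_{i\ge 1}$ are independent, and for any mean-zero i.i.d.\ family $\{z(i)\}$ with variance $\sigma^2$ the sequence $\{-z(i)\}$ has the same property; hence it suffices to prove $\limsup_{n\to\infty}\chi(n)/\sqrt{2n\log\log n}=\sigma$ a.s.\ for a one-sided walk, and then apply this once to the walk built from $\{y(i)\}_{i\ge 0}$ (yielding the $j\to+\infty$ assertion) and once to the walk built from $\{-y(-i)\}_{i\ge 1}$ (yielding the $j\to-\infty$ assertion).

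\textbf{Upper bound.} To show $\limsup_{n}\chi(n)/\sqrt{2n\log\log n}\le\sigma$ a.s., the plan is: (i) if $y$ is unbounded, truncate it at a level growing slowly in $n$, so as to reduce to a summand with good exponential moments while controlling the discarded part by a separate Borel--Cantelli argument; (ii) apply a Chernoff/exponential-Markov estimate of the shape $P(\chi(n)\ge\lambda\sqrt n)\le\exp\!\big(-\tfrac{\lambda^2}{2\sigma^2}(1+o(1))\big)$; (iii) evaluate it along a geometric subsequence $n_k=\lfloor\theta^k\rfloor$, $\theta>1$, with $\lambda_k=(1+\delta)\sigma\sqrt{2n_k\log\log n_k}$, so that the probabilities are summable in $k$; (iv) pass from the subsequence to all $n$ using a maximal inequality (Kolmogorov's, or the Ottaviani/Etemadi inequality); (v) invoke the first Borel--Cantelli lemma and then let $\theta\downarrow 1$ and $\delta\downarrow 0$.

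\textbf{Lower bound.} To show $\limsup_{n}\chi(n)/\sqrt{2n\log\log n}\ge\sigma$ a.s., I would work with the \emph{independent} block increments $D_k:=\chi(n_{k+1})-\chi(n_k)$ along a rapidly growing subsequence ($n_k=\theta^k$ with $\theta$ large, so that $n_{k+1}$ dominates $n_k$). A Gaussian-type lower tail bound for $D_k$---from the central limit theorem, from Berry--Esseen, or from a direct Paley--Zygmund-type argument---gives $\sum_k P\!\big(D_k\ge(1-\delta)\sigma\sqrt{2n_{k+1}\log\log n_{k+1}}\big)=\infty$, so by the second Borel--Cantelli lemma (the $D_k$ being independent) this event occurs infinitely often. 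Writing $\chi(n_{k+1})=D_k+\chi(n_k)$ and using the upper bound already established for $-\chi(n_k)$ (which keeps the ``backward'' term from dragging $\chi(n_{k+1})$ down by more than $o(\sqrt{n_{k+1}\log\log n_{k+1}})$ along a further subsequence) yields $\limsup_n \chi(n)/\sqrt{2n\log\log n}\ge(1-\delta)\sigma$, and then $\delta\downarrow 0$ finishes.

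The main obstacle is obtaining the \emph{sharp} constant $\sqrt 2$: the upper bound requires the exponential estimate to be tight to leading order (hence the truncation step when $y$ is unbounded), and the lower bound requires a matching two-sided tail \emph{lower} bound, which is the more delicate half. For the purposes of this article the truncation is unnecessary, since the increments to which the theorem is applied are bounded; one may then cite Kolmogorov's LIL directly.
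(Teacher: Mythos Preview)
Your proposal is correct, and in fact goes well beyond what the paper does: the paper does not prove this theorem at all, but simply states it in a convenient form and cites \cite{Durret} and \cite{Feller} for the proof. Your opening sentence already matches the paper's treatment exactly; the remainder of your write-up (reduction to the one-sided case, upper bound via exponential tail estimates along a geometric subsequence plus a maximal inequality and first Borel--Cantelli, lower bound via independent block increments and second Borel--Cantelli) is a sound sketch of the classical Hartman--Wintner/Kolmogorov argument and is more than the paper itself provides.
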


The LIL is an extremely sharp 
description of a random walk. It says that, with a probability of one, the magnitude of 
$\chi(j)$ exceeds the curve $\sigma\sqrt{2j\log\log (j)}$ (by any fixed amount) only a finite number of times 
but comes arbitrarily near it an infinite number of times. Here is how we use the LIL:
\begin{corollary}\label{the core}
For almost every realization of $\{k(j)\}$ and $\{m(j)\}$ there is a finite positive constant
$
C_\om = C_\om(k,m)
$
for which
$$
|\chi_k(j)| + |\chi_m(j)| \le C_\om \sqrt{|j| \log \log(|j|+e)}
$$
for all $j \in \Z$.
\end{corollary}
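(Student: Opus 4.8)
The plan is to apply the Law of Iterated Logarithms (Theorem \ref{LIL}) separately to $\chi_k$ and to $\chi_m$, and then to absorb the finitely many small-$|j|$ terms into the constant. First I would check that $\chi_k$ and $\chi_m$ are genuinely of the form covered by Theorem \ref{LIL}: by \eqref{chis} they are the two-sided random walks with i.i.d. increments $y_k(i) \coloneqq \tilde k/k(i) - 1$ and $y_m(i) \coloneqq m(i)/\bar m - 1$, respectively. Each increment has mean zero (as observed just after \eqref{chis}), and since $1/k(i) \in [a_k,b_k]$ and $m(i) \in [a_m,b_m]$ almost surely, each increment is bounded and hence square-integrable, with $\E[y_k^2] = \tilde k^2 \sigma_k^2$ and $\E[y_m^2] = \sigma_m^2/\bar m^2$. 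Thus Theorem \ref{LIL} applies; combining it with the same statement for the walks with negated increments (to pass to absolute values) yields, on an event $\Omega_0$ of probability one,
$$
\limsup_{|j|\to\infty}\frac{|\chi_k(j)|}{\sqrt{2|j|\log\log|j|}} = \tilde k\,\sigma_k
\mand
\limsup_{|j|\to\infty}\frac{|\chi_m(j)|}{\sqrt{2|j|\log\log|j|}} = \frac{\sigma_m}{\bar m}.
$$

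Then, fixing $\om \in \Omega_0$: since both $\limsup$'s are finite, there is a realization-dependent integer $N_\om \ge 3$ with $|\chi_k(j)| + |\chi_m(j)| \le K\sqrt{2|j|\log\log|j|}$ for all $|j| \ge N_\om$, where $K \coloneqq \tilde k\sigma_k + \sigma_m/\bar m + 1$; since $t\mapsto\log\log t$ is increasing, the right-hand side is at most $\sqrt2\,K\,\sqrt{|j|\log\log(|j|+e)}$. For the finitely many indices $0<|j|<N_\om$ I would use that $\sqrt{|j|\log\log(|j|+e)}$ is strictly positive (for $j\ne0$, since $|j|+e>e$) to set
$$
C' \coloneqq \max_{0<|j|<N_\om}\frac{|\chi_k(j)|+|\chi_m(j)|}{\sqrt{|j|\log\log(|j|+e)}}<\infty,
$$
while at $j=0$ both sides of the claimed inequality vanish. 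Taking $C_\om \coloneqq \max\{\sqrt2\,K,\ C'\}$ then gives the bound for every $j\in\Z$, and since $\Omega_0$ has probability one the corollary follows.

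I do not expect a genuine obstacle here: the entire analytic content is Theorem \ref{LIL}, and the remainder is bookkeeping. The only points needing care are that the threshold $N_\om$—and hence $C_\om$—depends on the realization (which is all the statement claims, so no uniformity in $\om$ is asserted), and the small-$|j|$ regime, where $\log\log|j|$ would be negative or undefined; this is precisely why the statement is phrased with $\log\log(|j|+e)$, which is nonnegative for every $j\in\Z$ and strictly positive for $j\ne0$.
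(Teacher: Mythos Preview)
Your proposal is correct and follows essentially the same approach as the paper: apply the LIL to each of $\chi_k$ and $\chi_m$ to obtain the bound for large $|j|$, then absorb the finitely many remaining indices into the realization-dependent constant. If anything, your treatment of the small-$|j|$ regime is cleaner than the paper's, which writes the maximum over $0<|j|\le N_k$ with $\sqrt{|j|\log\log|j|}$ in the denominator (problematic at $|j|=1,2$), whereas you use $\sqrt{|j|\log\log(|j|+e)}$ throughout and handle $j=0$ explicitly.
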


\begin{remark} The constant $C_\om$ is almost surely finite by the LIL, but it may be extremely large.
There is no way to determine its magnitude except in very special circumstances.
Note, however, it does not depend on $\ep$.
\end{remark}

\begin{remark} In this paper, we use a small modification of the usual ``big $C$'' notation.
If a constant in an estimate depends on the particular realization of the coefficients
we mark it as ``$C_\om$.'' 
If it does not, we omit the subscript $\omega$.
All such constants $C_\om$
are always almost surely finite.
No such constants will ever depend on $\ep$.
\end{remark}

\begin{proof}
The LIL implies that for almost every realization of $\{k(j)\}$ there is a natural number $N_k$ such that
$$
|\chi_k(j)| \le 2 \tilde{k}\sigma_k \sqrt{2|j| \log \log(|j|)} 
$$
when $|j| \ge N_k$. Then put 
$$
C_k:=\max\left\{ {2\tilde{k}\sigma_k  \sqrt{2}},
\max_{0<|j| \le N_k} { \chi_k(j) \over \sqrt{|j| \log \log(|j|)}} \right\}.
$$
It follows that $|\chi_k(j)| \le C_k  \sqrt{|j| \log \log(|j|)} \le  C_k  \sqrt{|j| \log \log(|j|+e)}$
for all $j$. The same argument can be used to estimate $\chi_m$.
\end{proof}

Given the growth rate in the LIL, we introduce a new norm fashioned to absorb it:
$$
\| F \|_{H^s_{LIL}} := \sum_{i = 0}^s\| (1+|\cdot|\log\log(|\cdot|+e))^{1/2}F^{(i)}\|_{L^2}.
$$
The space $H^s_{LIL}$ will be the completion of $L^2$ with respect to this norm. 
Similarly, we also introduce
$$
\| F \|_{H^s_{sr}} := \sum_{i = 0}^s \| (1+|\cdot|)^{1/2}F^{(i)}\|_{L^2}
$$
and the space $H^s_{sr}$. Note that $\|F\|_{H^s} \le \|F\|_{H^s_{sr}} \le \|F\|_{H^s_{LIL}}$
where $H^s$ is the usual $L^2$-based Sobolev space of functions $\R\to \R$ which are weakly $s$-times
differentiable. 


Now we unveil the two main estimates we need to provide almost sure control of the residuals.
\begin{lemma}\label{Big Lemma}
For any $T_0>0$
and
almost every realization of $\{k(j)\}$ and $\{m(j)\}$ there is a finite positive constant
$
C_\om = C_\om\left(k,m,T_0\right)
$
for which $\ep \in (0,1/2)$ implies
\begin{equation}\label{big est 1}
\sup_{|t| \leq T_0/\epsilon} \norm{\chi(\cdot)F(\epsilon ( \cdot -  c t))}_{\ell^2} \leq C_{\om}\epsilon^{-1} \sqrt{\log\log(\epsilon^{-1})}\|F\|_{H^1_{LIL}}
\end{equation}
and
\begin{equation}\label{big est 2}
\sup_{|t| \leq T_0/\epsilon} \norm{\chi(\cdot)\delta^\pm F(\epsilon( \cdot -  ct))}_{\ell^2} \leq C_\om \sqrt{\log\log(\epsilon^{-1})}\|F\|_{H^2_{LIL}}. \end{equation}
In the above $\chi$ is either $\chi_k$ or $\chi_m$.
\end{lemma}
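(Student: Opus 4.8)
The plan is to reduce both estimates to a single quantity: the weighted sum $\sum_{j\in\Z} |j|\log\log(|j|+e)\,|G(\ep(j-ct))|^2$ where $G$ is $F$ or $\delta^\pm F$. The first move is to apply Corollary~\ref{the core}, which replaces $|\chi(j)|$ by $C_\om\sqrt{|j|\log\log(|j|+e)}$ pointwise and uniformly in $j$; this strips all the randomness out of the problem and leaves a deterministic estimate with the realization-dependent constant $C_\om$ pulled outside. Thus it suffices to bound, for $|t|\le T_0/\ep$,
$$
\sum_{j\in\Z} |j|\log\log(|j|+e)\,\big|G(\ep(j-ct))\big|^2.
$$

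The key step is to compare this sum to an integral. Substituting $x = \ep(j-ct)$, the points $\ep(j-ct)$ form a grid of spacing $\ep$ on $\R$, and $|j| = |x/\ep + ct| \le \ep^{-1}|x| + |ct| \le \ep^{-1}(|x| + c T_0)$ since $|t|\le T_0/\ep$. Hence $|j|\log\log(|j|+e) \lesssim \ep^{-1}(1+|x|)\log\log(\ep^{-1}(1+|x|+cT_0)+e)$, and using $\log\log(ab)\lesssim \log\log a + \log\log b$ type subadditivity (valid for $a,b\ge e$) we peel off a factor $\log\log(\ep^{-1})$, leaving $\lesssim \ep^{-1}\log\log(\ep^{-1})\,(1+|x|)\log\log(|x|+C)$. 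Summing over the grid and recognizing a Riemann sum, $\ep\sum_j (\cdots) \lesssim \int_\R (1+|x|)\log\log(|x|+C)|G(x)|^2\,dx \lesssim \|G\|_{H^1_{LIL}}^2$ (one needs the standard lemma that for nice $H^1$ functions the $\ep$-grid Riemann sum of a weighted $L^2$ density is controlled by the continuum integral plus a lower-order derivative term — this is where the $H^1$ rather than $L^2$ norm on the right enters, and where the extra derivative in \eqref{big est 2} comes from since there $G=\delta^\pm F$ and the discrete difference of the sampled function must be compared to $\ep F'$). Putting it together gives $\sum_j |j|\log\log(|j|+e)|G(\ep(j-ct))|^2 \lesssim \ep^{-2}\log\log(\ep^{-1})\|G\|_{H^1_{LIL}}^2$ after multiplying back by $\ep^{-1}$; taking square roots yields \eqref{big est 1} with $G=F$, and for \eqref{big est 2} with $G=\delta^\pm F$ one gains an extra $\ep$ from $\|\delta^\pm F(\ep\cdot)\|$-type estimates ($\delta^\pm F(\ep\cdot)$ is essentially $\ep F'(\ep\cdot)$), accounting for the absence of the $\ep^{-1}$ prefactor there.

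I expect the main obstacle to be making the Riemann-sum comparison fully rigorous with the singular weight $(1+|x|)\log\log(|x|+e)$ and uniformly in the shift $ct$ over the whole range $|t|\le T_0/\ep$. One must be careful that the shift does not interact badly with the weight: the bound $|j|\le \ep^{-1}(|x|+cT_0)$ is what keeps everything uniform, but verifying the subadditivity manipulations on $\log\log$ near the transition region $|x|\sim 1$, and handling the grid points near $j=0$ where $\log\log(|j|+e)$ and the weight are $O(1)$ (so they contribute only a harmless $\|F\|_{L^2}^2$ term), requires a careful but routine case split. A clean way to organize the comparison is to dominate the sampled weighted density by a fixed continuous envelope $W(x) = C(1+|x|)\log\log(|x|+C')$ that is monotone on $x\ge 0$ and on $x\le 0$, so that $\ep\sum_j W(\ep(j-ct))|G(\ep(j-ct))|^2 \le \int_\R W(x)|G(x)|^2 dx + (\text{error controlled by }\|G\|_{H^1_W})$ via the standard quadrature estimate; then absorb the $ct$-shift into $W$ using its at-most-linear growth. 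None of these steps is deep, but assembling them while tracking the $\ep$-powers and the single extracted $\log\log(\ep^{-1})$ is the technical heart of the lemma.
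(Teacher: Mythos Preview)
Your proposal is correct and takes essentially the same approach as the paper: apply Corollary~\ref{the core} to strip out the randomness, split the weight $|j|\log\log(|j|+e)$ using the shift $j=(j-ct)+ct$ and the calculus inequalities of Lemma~\ref{calc lemma} to extract the factor $\log\log(\ep^{-1})$, then compare the weighted $\ell^2$ sum to an integral via the $H^1$-sampling estimate (Lemma~4.3 of \cite{Wright}); for \eqref{big est 2} the paper makes your ``$\delta^\pm F(\ep\cdot)\approx \ep F'(\ep\cdot)$'' precise via the averaging operator $(\A_\ep G)(X)=\ep^{-1}\int_X^{X+\ep}G$ and then invokes \eqref{big est 1}. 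The only organizational difference is that the paper first uses the convexity inequality in Lemma~\ref{calc lemma} to separate a pure-time term $II$ from a pure-space term $I$, whereas you bound $|j|\le \ep^{-1}(|x|+cT_0)$ directly before splitting the $\log\log$ --- both decompositions lead to the same estimate.
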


To prove these we need some calculus estimates.
\begin{lemma} \label{calc lemma}
For all $\epsilon \in (0,1)$, and $a,b \in \R$
\begin{equation*}|a+b|\log\log(|a+b|+e) \leq |a|\log\log(2|a|+e)+|b|\log\log(2|b|+e)\end{equation*}
and 
\begin{equation*}\log\log(|x|+ e) \leq \log(2\log(\epsilon|x|+e))+\log\log(\epsilon^{-1}+e).\end{equation*}
\end{lemma}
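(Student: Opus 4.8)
The plan is to handle the two inequalities separately, each time turning the nested argument on the left into a product so that a logarithm splits it additively. For the second inequality I would start from the elementary estimate $|x|+e \le (\epsilon^{-1}+e)(\epsilon|x|+e)$, valid for $\epsilon\in(0,1)$: expanding the right-hand side gives $|x| + \epsilon^{-1}e + e\epsilon|x| + e^2$, and $\epsilon^{-1}e + e^2 > e$. Taking $\log$ of both sides and then $\log$ again (monotonicity of $\log$, and all arguments are at least $e$) yields
$$
\log\log(|x|+e) \le \log\big(\log(\epsilon^{-1}+e) + \log(\epsilon|x|+e)\big).
$$
Set $A := \log(\epsilon^{-1}+e)$ and $B := \log(\epsilon|x|+e)$; since $\epsilon^{-1}+e > e$ and $\epsilon|x|+e \ge e$ we have $A,B \ge 1$, hence $2AB - (A+B) = A(B-1) + B(A-1) \ge 0$. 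Therefore $\log(A+B) \le \log(2AB) = \log 2 + \log A + \log B$, which is exactly $\log(2\log(\epsilon|x|+e)) + \log\log(\epsilon^{-1}+e)$.

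For the first inequality the one substantive point is that, although $\log\log(u+e)$ is concave, the function $h(u) := u\log\log(u+e)$ is nondecreasing \emph{and convex} on $[0,\infty)$. I would check this directly: differentiating gives
$$
h'(u) = \log\log(u+e) + \frac{u}{(u+e)\log(u+e)}, \qquad h''(u) = \frac{(u+2e)\log(u+e) - u}{\big[(u+e)\log(u+e)\big]^2},
$$
and for $u \ge 0$ one has $\log(u+e) \ge \log e = 1$, so the numerator of $h''$ is at least $2e > 0$ and both terms of $h'$ are nonnegative. Granting this, put $s := |a|$ and $t := |b|$. By the triangle inequality and monotonicity, $|a+b|\log\log(|a+b|+e) = h(|a+b|) \le h(s+t)$, and convexity applied to the midpoint of $2s$ and $2t$ gives
$$
h(s+t) = h\!\left(\tfrac12(2s) + \tfrac12(2t)\right) \le \tfrac12 h(2s) + \tfrac12 h(2t) = s\log\log(2s+e) + t\log\log(2t+e),
$$
which is the assertion.

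The only real obstacle is spotting and verifying the convexity of $h$; after that the first inequality is just monotonicity plus Jensen at the midpoint, and the second is two applications of $\log$ together with $A+B \le 2AB$ for $A,B \ge 1$. I expect no trouble with edge cases: the added constant $e$ is exactly what keeps $\log(u+e) \ge 1$ and $\log\log(u+e) \ge 0$ for all $u \ge 0$, so small arguments cause no difficulty.
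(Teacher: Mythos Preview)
Your proof is correct. For the first inequality your argument is essentially the paper's: the paper simply asserts that $u\mapsto u\log\log(u+e)$ is convex and leaves it at that, whereas you actually compute $h''$ and check the sign, so you have filled in what the paper omits.

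For the second inequality your route is genuinely different and cleaner. The paper proceeds in two stages through the intermediate quantity $\epsilon|x|+\epsilon^{-1}+e$: first it bounds $|x|+e\le(\epsilon|x|+\epsilon^{-1}+e)^2$ to get $\log\log(|x|+e)\le\log(2\log(\epsilon|x|+\epsilon^{-1}+e))$, and then it uses a derivative comparison to show $\log(\epsilon|x|+\epsilon^{-1}+e)\le\log(\epsilon|x|+e)\log(\epsilon^{-1}+e)$. You instead start from the product bound $|x|+e\le(\epsilon^{-1}+e)(\epsilon|x|+e)$, so the first logarithm already splits additively, and then the elementary inequality $A+B\le 2AB$ for $A,B\ge 1$ finishes it in one line. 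Your approach avoids the derivative argument entirely and makes the role of the factor $2$ transparent; the paper's approach, while correct, is more circuitous.
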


\begin{proof}
The first inequality follows from the fact that $|x|\log\log(|x|+e)$ is a convex function.

We will show the second inequality in two steps. First we show that \begin{equation*}\log\log(|x|+ e)  \leq \log(2 \log( \epsilon|x|+\epsilon^{-1}+e)). \end{equation*}
Since $\log$ is monotonic, this inequality follows from
\begin{equation*}|x|+ e\leq (\epsilon|x|+\epsilon^{-1}+e)^2,\end{equation*} which is trivial.

Now we show that \begin{equation*}\log(2\log(\epsilon|x| +\epsilon^{-1}+e)) \leq \log(2\log(\epsilon|x|+e))+\log\log(\epsilon^{-1}+e). \end{equation*}
Note that at $x=0$, equality holds. For $x\geq 0$ we have that 
\begin{equation*}\frac{d}{dx}\log(\epsilon x+\epsilon^{-1}+e)=\frac{\epsilon }{\epsilon x +\epsilon^{-1}+e}\end{equation*} and 
\begin{equation*}\frac{d}{dx}\log(\epsilon|x|+e)\log(\epsilon^{-1}+e)=\frac{\epsilon \log(\epsilon^{-1}+e)}{\epsilon x+e}. \end{equation*}
Since 
\begin{equation*}
\frac{\epsilon }{\epsilon x +\epsilon^{-1}+e} 
\leq 
{\ep \over \ep x + e}\le
\frac{\epsilon \log(\epsilon^{-1}+e)}{\epsilon x+e},\end{equation*} 
we see that $\log(\epsilon x+\epsilon^{-1}+e) $ grows more slowly than $\log(\epsilon x+e)\log(\epsilon^{-1}+e)$. Since both functions are even, we get by symmetry that \begin{equation*}2\log(\epsilon|x|+\epsilon^{-1}+e) \leq 2\log(\epsilon|x|+e)\log(\epsilon^{-1}+e). \end{equation*} Taking $\log$ of both sides, we get the desired result. 
\end{proof}

Now we can prove our key estimates.
\begin{proof} (Lemma \ref{Big Lemma})
Take $\chi$ to be $\chi_k$ or $\chi_m$ and fix $T_0>0$.
Using Corollary \ref{the core} 
\begin{equation*}\begin{aligned}\norm{ \chi(\cdot)F(\epsilon (\cdot -  c t))}_{\ell^2} &=  \left(\sum_{j \in \Z} \chi(j)^2F(\epsilon (j - c t))^2\right)^{1/2}\\ 
&\leq C_\om \left(\sum_{j \in \Z}|j| \log\log(|j|+e)F(\epsilon (j - c t))^2\right)^{1/2}.\end{aligned} 
 \end{equation*}
 The constant $C_\om$ here depends upon the realization and any estimate below will depend on the realization because of this step only.

Using the first inequality in Lemma \ref{calc lemma} with $a=j-ct$ and $b=ct$ and the triangle inequality
we get
\begin{equation*}\begin{aligned}\norm{ \chi(\cdot)F(\epsilon 
(\cdot - c t))}_{\ell^2} &\leq C_\om\left( 
\sum_{j \in 
\Z}|j-ct|\log\log(2|j-ct|+e)F(\epsilon 
(j- c t))^2 \right)^{1/2} \\ 
&+C_\om \sqrt{|t|
\log\log(2c|t|+e)}\| F(\epsilon( \cdot -  c 
t))\|_{\ell^2}.
\end{aligned} \end{equation*}
Call the two terms on the right $I$ and $II$. We estimate $II$ first.

Lemma 4.3  from \cite{Wright} shows that
$$
\|F(\ep(\cdot - ct))\|_{\ell^2} \le C \ep^{-1/2}\|F(\cdot - ct)\|_{H^1} =  C \ep^{-1/2}\|F\|_{H^1}
$$
and so
$$
II \le C_\om \ep^{-1/2} \sqrt{|t|
\log\log(2c|t|+e)}\|F\|_{H^1}.
$$
Then 
$$
\sup_{|t| \le T_0/\ep} II \le C_\om \ep^{-1} \sqrt{
\log\log(2cT_0\ep^{-1}+e)}\|F\|_{H^1}.
$$
Routine features of the logarithm show that $\log \log(2cT_0\ep^{-1}+e) \le C \log \log(1/\ep)$ 
 when $\ep \in (0,1/2)$ and so we have
$$
\sup_{|t| \le T_0/\ep} II \le C_\om \ep^{-1} \sqrt{
\log\log(1/\ep)}\|F\|_{H^1}.
$$

As for $I$, 
 using the second inequality  in Lemma \ref{calc lemma} with $|x| =2|j-ct|$ followed by the triangle inequality
 gets us:
\bes\begin{split}
 I\le &C_\om\left( 
\sum_{j \in 
\Z}|j-ct|\log(2\log(2\epsilon|j-ct|+e))F(\epsilon 
(j- c t))^2 \right)^{1/2}\\+
&C_\om\left( 
\sum_{j \in 
\Z}|j-ct|\log\log(\epsilon^{-1}+e) F(\epsilon 
(j- c t))^2 \right)^{1/2}.
\end{split}
\ees
Then we multiply by $\sqrt{\ep/\ep}$ and do some algebra to get:
\bes\begin{split}
I \le &C_\om \ep^{-1/2} \| \sqrt{\ep|\cdot-ct| \log(2 \log(2 |\ep(\cdot-ct)| +e))} F(\ep(\cdot-ct))\|_{\ell^2}\\
+ &C_\om \ep^{-1/2} \sqrt{\log \log(\ep^{-1} + e)} \| \sqrt{\ep|\cdot-ct| } F(\ep(\cdot-ct))\|_{\ell^2}.
\end{split}
\ees
Applying Lemma 4.3 from \cite{Wright} to 
tells us that
\bes\begin{split}
\| \sqrt{\ep|\cdot-ct| \log(2 \log(2 |\ep(\cdot-ct)| +e))} F(\ep(\cdot-ct))\|_{\ell^2}
\le & C \ep^{-1/2}\|F\|_{H^1_{LIL}}
\end{split}
\ees
and
$$
\| \sqrt{\ep|\cdot-ct| } F(\ep(\cdot-ct))\|_{\ell^2} \le 
 C\ep^{-1/2}\|F\|_{H^1_{sr}}
$$
and so we have,
$$
I \le C_\om \ep^{-1} \sqrt{\log \log(\ep^{-1} + e)} \|F\|_{H^1_{LIL}}\le C_\om \ep^{-1} \sqrt{\log \log(\ep^{-1})}\|F\|_{H^1_{LIL}}.
$$
Note that the right hand side does not depend on $t$ and so
$\sup_{|t| \le T_0/\ep} I \le C_\om \ep^{-1} \sqrt{\log \log(\ep^{-1})}$ and all together we have shown \eqref{big est 1}.

It happens that \eqref{big est 2} follows almost immediately from \eqref{big est 1} with some operator trickery.
For functions $G: \R \to \R$  and  $\ep \ne 0$ define the operator $\A_\ep$ via
$$
(\A_\ep G) (X) := {1 \over \ep} \int_X^{X+\ep} G(s) ds.
$$
We have
\be\label{average}
\| \A_\ep G\|_{H} \le C\|G\|_{H}
\ee
where $H$ may be $H^s$, $H^s_{LIL}$ or $H^s_{sr}$. Here comes the argument.
First we use Jensen's inequality to get:
\[
\| w(\cdot) \A_\ep G\|_{L^2}^2 = 
\int_{-\infty}^{\infty}w(X)^2 \left(\dfrac{1}{\epsilon}\int_X^{X+\epsilon}G(s)ds\right)^2dX \leq\int_{-\infty}^{\infty}
w(X)^2\dfrac{1}{\epsilon}\int_X^{X+\epsilon}G(s)^2dsdX.  \]
In the above $w(X)$ is a weight function.
If we change the order of integration we get
$$
\| w(\cdot) \A_\ep G\|_{L^2}^2
= \int_{-\infty}^\infty G(s)^2 {1 \over \ep} \int_{s-\ep}^s w(X)^2  dX ds.
$$
Let $\ds b_\ep(s) := {1 \over \ep w(s)^2} \int_{s-\ep}^s w(X)^2  dX$ so we have
$$
\| w(\cdot) \A_\ep G\|_{L^2}^2
= \int_{-\infty}^\infty w(s)^2 G(s)^2 b_\ep(s) dX ds \le \|b_\ep\|_{L^\infty} \| w(\cdot) G\|^2_{L^2}.
$$
If $w(X) = 1$, $w(X) = \sqrt{1 + |X|}$ or $w(X) = \sqrt{1+|X|\log \log (|X|+e)}$ it is easy to use the mean value theorem  
to show 
$\|b_\ep\|_{L^\infty} \le C$ when $\ep \in (0,1)$. With this, the last displayed inequality implies \eqref{average} (A little calculus shows that when $w(X)=\sqrt{1+|X|}$, then $\norm{b_\epsilon}_{L^{\infty}} \leq 3/2$ and thus in \eqref{average} $C \leq 3/2$.)


Continuing on in the proof of \eqref{big est 2}, the fundamental theorem of calculus tells us that $ F(X+\ep) - F(X)=\ep (\A_\ep F')(X)$. Thus:
$$
(\delta^+ F)(\ep (j-ct)) = F(\ep (j-ct) + \ep) - F(\ep(j-ct)) =\ep (\A_\ep F') (\ep(j-ct)).
$$
In which case we see that
$$
\| \chi(\cdot) \delta^+F(\ep(\cdot -ct))\|_{\ell^2} = 
\ep \| \chi(\cdot) (\A_\ep F')(\ep(\cdot-ct))\|_{\ell^2}.
$$
We have produced an extra factor of $\ep$!
Using \eqref{big est 1} and \eqref{average}
$$
\| \chi(\cdot) \delta^+F(\ep(\cdot -ct))\|_{\ell^2} \le {C}_\om \sqrt{\log \log(\ep^{-1})}\|\A_\ep F'\|_{H^1_{LIL}}
\le {C}_\om \sqrt{\log \log(\ep^{-1})}\|F\|_{H^2_{LIL}}.
$$
That is \eqref{big est 2} and does it for this proof.


\end{proof}

Now we can prove:
\begin{proposition} \label{residuals}
Fix $A,B \in H^3_{LIL}$ and take $\tilde{r}_\ep$ and $\tilde{p}_\ep$ as in \eqref{ansatz2}.
Fix  $T_0>0$.
Then for 
almost every realization of $\{k(j)\}$ and $\{m(j)\}$ there is a finite positive constant
$
C_\om = C_\om\left(k,m,T_0,\|A\|_{H^3_{LIL}},\|B\|_{H^3_{LIL}}\right)
$
for which $\ep \in (0,1/2)$ implies
\be\label{res ests}
\sup_{|t|\le T_0/\ep}\left( \|\Res_1(\tilde{r}_\ep,\tilde{p}_\ep)\|_{\ell^2} + \|\Res_1(\tilde{r}_\ep,\tilde{p}_\ep)\|_{\ell^2} \right)
\le C_\om \ep \sqrt{\log \log(1/\ep)}.
\ee
\end{proposition}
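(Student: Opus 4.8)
The plan is to return to the explicit expressions \eqref{res1v1} and \eqref{res2v1} for the residuals, substitute the choices of $P_0,P_1,Q_0,Q_1$ that we made to kill the $O(1)$ and $O(\ep)$ terms, and then carefully identify what actually survives. Since we engineered the $O(1)$ and $O(\ep)$ contributions to vanish, the leftover terms in $\Res_1$ are $E_1^+(P_0)$, $\ep E_1^+(P_1)$, $\ep^2 \delta_1^+ P_1$, and $-\ep^2 \partial_\tau R_1$; similarly for $\Res_2$ with $E_1^-(kR_0)$, $\ep E_1^-(kR_1)$, $\ep^2 \delta_1^- (kR_1)$, and $-\ep^2 \partial_\tau P_1$. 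Naively one expects all of these to be $O(\ep^2)$, but the point flagged in Section \ref{homog section} is that $P_1$ and $Q_1 = kR_1$ carry a random-walk factor ($\chi_k$ or $\chi_m$) that grows like $\sqrt{|j|\log\log|j|}$, so the terms containing $P_1$ or $R_1$ are genuinely larger. So the first step is bookkeeping: write each surviving term as $\chi(\cdot)$ (or a constant) times a derivative of $A$ or $B$ evaluated at $\ep(j \mp ct)$, using \eqref{P1Q1} and \eqref{dalembert}.

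Next I would estimate each type of term in $\ell^2$. The terms $E_1^+(P_0)$ and $E_1^-(kR_0) = E_1^-(Q_0)$ involve no random walk: $P_0$ and $Q_0$ are $j$-independent and equal to $\bar P_0(X,\tau)$, $\bar Q_0(X,\tau)$, which are sums of $A(\ep(j-ct))$ and $B(\ep(j+ct))$; the truncation-error operator $E_1^\pm$ produces something of size $\ep^2$ times an $\ell^2$ norm of a second derivative of $A$ or $B$ sampled at spacing $\ep$, and the usual $\ell^2$-to-$L^2$ comparison (Lemma 4.3 of \cite{Wright}, already invoked above) turns that sampled norm into $\ep^{-1/2}\|A\|_{H^2}$ or similar, giving a clean $O(\ep^{3/2})$ — comfortably inside the claimed bound. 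The dangerous terms are those containing $P_1$ or $Q_1$: namely $\ep E_1^\pm(\cdot)$ applied to $\chi \cdot (\text{derivative of } A,B)$, the $\ep^2 \delta_1^\pm$ terms, and $\ep^2 \partial_\tau$ of $R_1$ or $P_1$. Here I would use Lemma \ref{Big Lemma}: for the $\ep^2 \partial_\tau$ terms and the $\ep^2\delta_1^\pm$ terms (note $\delta_1^\pm = \pm\check S^\pm \partial_X$, so these are just $\chi$ times a second derivative of $A,B$ sampled), estimate \eqref{big est 1} gives $C_\om \ep^{-1}\sqrt{\log\log(1/\ep)}\|A\|_{H^3_{LIL}}$ after pulling out the two powers of $\ep$, leaving $C_\om \ep \sqrt{\log\log(1/\ep)}$; for the $\ep E_1^\pm$ terms applied to $\chi$ times a derivative, $E_1^\pm$ contributes a factor $\ep$ together with a difference-quotient structure, so after writing the difference via the averaging operator $\A_\ep$ as in the proof of \eqref{big est 2} one again lands on \eqref{big est 2}-type bounds times $\ep^2$, which is even smaller. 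This is why we need $A, B \in H^3_{LIL}$: the worst term costs two $X$-derivatives beyond $\bar P_0$ (one from $P_1 = \chi_k \partial_X \bar P_0$ and one more from $\delta_1^\pm$ or $\partial_\tau$, the latter converting to $\partial_X$ via \eqref{wave1}), plus the extra derivative the $H^1_{LIL}$ norm in Lemma \ref{Big Lemma} eats.

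Assembling: every surviving term in $\Res_1$ and $\Res_2$ is bounded, uniformly for $|t|\le T_0/\ep$, by $C_\om \ep \sqrt{\log\log(1/\ep)}$ times a fixed combination of $\|A\|_{H^3_{LIL}}$ and $\|B\|_{H^3_{LIL}}$ (the truly $O(\ep^{3/2})$ and $O(\ep^2\sqrt{\log\log})$ pieces are absorbed since $\ep^{3/2} \le \ep\sqrt{\log\log(1/\ep)}$ and $\ep^2\sqrt{\log\log(1/\ep)} \le \ep\sqrt{\log\log(1/\ep)}$ on $(0,1/2)$). Summing the finitely many terms and taking the supremum over $t$ gives \eqref{res ests} with $C_\om$ depending only on $k$, $m$, $T_0$, $\|A\|_{H^3_{LIL}}$ and $\|B\|_{H^3_{LIL}}$, as claimed — in particular not on $\ep$, since the $C_\om$ from Corollary \ref{the core} and from Lemma \ref{Big Lemma} have that property.

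The main obstacle I anticipate is purely organizational rather than conceptual: one must be scrupulous about which terms were actually cancelled by the $O(1)$ and $O(\ep)$ equations \eqref{o1}, \eqref{oep} versus which remain, and about tracking every power of $\ep$ through the operators $E_1^\pm$, $\delta_1^\pm$ and $\A_\ep$, since a single miscount of an $\ep$ would either break the estimate or leave a spurious divergence. A secondary subtlety is making sure the $\partial_\tau$ falling on $R_1$ and $P_1$ is handled — one uses \eqref{wave1} (equivalently \eqref{dalembert}, which shows $\partial_\tau$ of $A(\ep(\cdot-ct))$ is $-c\ep A'$, etc.) to trade $\tau$-derivatives for $X$-derivatives and an extra $\ep$, so that these terms are no worse than the $\delta_1^\pm$ terms. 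Once the bookkeeping is pinned down, each individual bound is a direct citation of Lemma \ref{Big Lemma} or of the elementary $\ell^2$-to-$L^2$ sampling estimate, so no genuinely new analysis is needed.
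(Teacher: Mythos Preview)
Your approach is correct and is essentially the paper's argument: identify the surviving terms in \eqref{res1v1}--\eqref{res2v1} after the $O(1)$ and $O(\ep)$ cancellations, write each as a (possibly $\chi$-weighted) sampled function, and bound them with Lemma~4.3 of \cite{Wright} and Lemma~\ref{Big Lemma}. The paper carries this out by first expanding everything explicitly into the formula \eqref{res1 gory} and then citing \eqref{big est 1} and \eqref{big est 2} line by line, which is just a more concrete version of your bookkeeping.

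One place to be careful: your claim that $\ep E_1^+(P_1)$ is $O(\ep^2\sqrt{\log\log(1/\ep)})$ is a slight overreach under the stated hypothesis $A,B\in H^3_{LIL}$. Writing $E_1^+(P_1)=\chi_k(j{+}1)\bigl[\delta^+G(\ep j)-\ep G'(\ep j)\bigr]$ with $G=\partial_X\bar P_0$ and extracting the second-order Taylor remainder leaves $\ep^3\chi_k(j{+}1)\cdot(\text{average of }G'')$; applying \eqref{big est 1} then costs $\|G''\|_{H^1_{LIL}}\sim\|A\|_{H^4_{LIL}}$, one derivative too many. The fix is either to estimate the two pieces of $E_1^+(P_1)$ separately by \eqref{big est 2} and \eqref{big est 1} (each giving $O(\ep\sqrt{\log\log})$ with only $H^3_{LIL}$), or---as the paper does---not to split at all: the sum $\ep^2\delta_1^+P_1+\ep E_1^+(P_1)$ collapses to $\ep\,\chi_k(j{+}1)\,\delta^+[\partial_X\bar P_0(\ep j)]$, to which \eqref{big est 2} applies directly and yields the leading $C_\om\ep\sqrt{\log\log(1/\ep)}\|A\|_{H^3_{LIL}}$ contribution. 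Either way the final bound \eqref{res ests} is unchanged, so this is a bookkeeping refinement rather than a gap.
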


\begin{proof} 
We prove the estimate for the piece involving $\Res_1$ as
 the other part is all but identical. A tedious calculation shows that 
 \begin{equation}\label{res1 gory}
  \begin{aligned} \text{Res}_1(\tilde{r}_\ep,\tilde{p}_\ep)&= 
\frac{1}{\sqrt{\tilde{k}\bar{m}}}\left( -\delta^+[A(\epsilon (j-ct))]+ \ep A'(\epsilon (j-ct))\right)\\
&+\frac{1}{\sqrt{\tilde{k}\bar{m}}}\left( -\delta^+[B(\epsilon (j+ct))]+ \ep B'(\epsilon (j+ct))\right)\\
&
+\frac{c\epsilon^2\chi_m(j)}{k(j)}A'(\epsilon(j-ct)) 
+\frac{c\epsilon^2\chi_m(j)}{k(j)}B'(\epsilon(j+ct))\\
&- \frac{\epsilon 
\chi_k(j+1)}{\sqrt{\tilde{k}\bar{m}}}\delta^+[A'(\epsilon (j -ct))]
- \frac{\epsilon 
\chi_k(j+1)}{\sqrt{\tilde{k}\bar{m}}}\delta^+[B'(\epsilon (j +ct))].
\end{aligned} \end{equation}
The terms in the first two lines are fully deterministic and estimable using Lemma 4.3 of \cite{Wright}.
Specifically the $\ell^2$ norm of 
each is controlled by
$$
C \ep^{3/2} \left(\|A\|_{H^2} + \|B\|_{H^2} \right) 
$$
for $|t| \le T_0/\ep$.
This is dominated by the right hand side of \eqref{res ests}.
Using \eqref{big est 1} we see that the $\ell^2$ norm in the third line is controlled by
$$
\ep^2 \left(C_\om \ep^{-1} \sqrt{\log \log(1/\ep)} \left(\|A'\|_{H^1_{LIL}} +\|B'\|_{H^1_{LIL}}\right)\right)
$$
for $|t| \le T_0/\ep$.
Again this is dominated by the right hand side of \eqref{res ests}.
Similarly we use \eqref{big est 2} to handle the terms in the last line, which are controlled by
$$
\ep \left(C_\om  \sqrt{\log \log(1/\ep)} \left(\|A'\|_{H^2_{LIL}} +\|B'\|_{H^2_{LIL}}\right)\right).
$$
It is here we see why $H^3_{LIL}$ is needed in \eqref{res ests}. 
\end{proof}

\begin{remark} \label{compare remark}
We quickly note that if the $k(j)$ and $m(j)$ are constant with respect to $j$, one can easily
chase through this proof and see that the estimate size of the residuals decreases to $C \ep^{3/2}$. Likewise
if the springs and masses vary periodically, one finds that $\chi_m(j)$ and $\chi_k(j)$ are in $\ell^\infty$
and then this proof would demonstrate the size of the residuals is bounded by $C \ep^{1/2}$.
\end{remark}

\subsection{Boundedness in Mean}

The almost sure boundedness does not provide us with any kind of description for the $\omega$ dependent constant $C_\omega$. In this section we estimate the error in mean, finding estimates in terms of $\sigma_m$ and $\sigma_k$.
\begin{lemma} 
Let $y(j)$ and $\chi(j)$ be as in Theorem \ref{LIL} and $n \in \Z^+\cup\{0\}$. Consider the process
\begin{equation*}W_j(n):= \chi(j+n)-\chi(j).\end{equation*} Then, for every $j$,  $W_j(n)$ is a martingale
in the variable $n$ and, for any $N >0$,
\begin{equation}
    \E[\max_{0 \leq n \leq N}(W_j(n))^{2}] \leq 4N\sigma^{2}.
\end{equation}

\end{lemma}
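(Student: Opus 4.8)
The plan is to recognize $W_j(n)$ as a sum of i.i.d. mean-zero increments and then apply Doob's $L^2$ maximal inequality. First I would note that, by the definition \eqref{rando} of $\chi$ (assuming $j,n\ge 0$ so that all indices stay on the positive side of the walk; the general case is handled by the same argument after relabeling the underlying i.i.d. family), one has
\[
W_j(n) = \chi(j+n) - \chi(j) = \sum_{i=j}^{j+n-1} y(i).
\]
Thus $W_j(\cdot)$ is, for each fixed $j$, a random walk started at $W_j(0)=0$ with i.i.d. mean-zero steps $y(j),y(j+1),\dots$, each with variance $\sigma^2$. Letting $\mathcal{F}_n := \sigma(y(j),\dots,y(j+n-1))$, we have $\E[W_j(n+1)\mid \mathcal{F}_n] = W_j(n) + \E[y(j+n)] = W_j(n)$, so $W_j(n)$ is an $\mathcal{F}_n$-martingale, and it is square-integrable since the $y(i)$ are bounded.

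Next I would invoke Doob's maximal inequality in its $L^2$ form: for a nonnegative submartingale $M_n$ (here $M_n = |W_j(n)|$, which is a submartingale because $x\mapsto |x|$ is convex),
\[
\E\!\left[\max_{0\le n\le N} M_n^2\right] \le 4\,\E[M_N^2].
\]
Finally, $\E[W_j(N)^2] = \sum_{i=j}^{j+N-1}\E[y(i)^2] = N\sigma^2$ by independence and mean-zero, which combined with the maximal inequality gives $\E[\max_{0\le n\le N}(W_j(n))^2] \le 4N\sigma^2$, as claimed.

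The only genuine subtlety — and the one place I would be careful in writing — is the two-sidedness of the walk $\chi$: for negative indices the increments come from the $y(-i)$ family with a sign built into the definition \eqref{rando}, so $W_j(n)$ for $j<0$ needs its increments identified correctly (they are still i.i.d. mean zero with variance $\sigma^2$, just drawn from a different part of the index set, possibly straddling $0$). Once the increment structure is pinned down, the martingale property and Doob's inequality are entirely routine, so I do not expect any real obstacle; the constant $4$ is exactly the Doob $L^2$ constant and is not optimized.
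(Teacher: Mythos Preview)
Your proposal is correct and follows essentially the same route as the paper: identify $W_j(n)$ as a sum of i.i.d.\ mean-zero increments, verify the martingale property, and apply Doob's $L^2$ maximal inequality together with $\E[W_j(N)^2]=N\sigma^2$. Your remark about the two-sided indexing is exactly the point the paper addresses in the remark immediately following the lemma.
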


\begin{proof}
From the definition of $\chi$, we have
\begin{equation}\label{W}
W_j(n)=y(j)+\cdots+y(j+n-1).\end{equation} Then \begin{equation*}
  \E[|W_j(n)|] \leq C |n|.  
\end{equation*}

Conditioning upon $W_j(n)$ gives
\begin{align*}
\E[W_j(n+1)|W_j(n)]=\E[y(j+n)+W_j(n)|W_j(n)]=W_j(n).   \end{align*}
This proves that $W_j$ is a martingale. From the basic theory of martingales this tells us $|W_j|^2$ is a submartingale. It follows from the $L^{p}$ maximum inequality, see \cite{Durret}, and a direct computation using \eqref{W}
\begin{equation*} 
\E\left[\max_{0 \leq n \leq N}(W_j(n))^2 \right]\leq 4\E[|W_j(N)|^2]=4N\sigma^2.
 \end{equation*}
 \end{proof}
 \begin{remark}
 \label{Rmk WLOG}
 We can define a similar process $W_j(n):=\chi(j-n)-\chi(j)$ which would have exactly the same properties but with a different version of \eqref{W} i.e.
 \[W_j(n)=y(j-n)+\cdots+y(j-1).\] This symmetry allows us to handle positive and negative times with the same argument.
 \end{remark}
 We use the following corollary in the results that follow.
 \begin{corollary}
 \label{martingale}
 \begin{equation*}
     \chi_k(j+n)-\chi_k(j) \mand \chi_m(j+n)-\chi_m(j)
 \end{equation*}
 are martingales in $n$ with 
 \begin{equation*}
  \E\left[\max_{0 \leq n \leq N} \left(\chi_k(j+n)-\chi_k(j)\right)^2\right]  \leq 4N \tilde{k}\sigma_k^2 \mand \  \E\left[\max_{0 \leq n \leq N} \left(\chi_k(j+n)-\chi_k(j)\right)^2 \right] \leq 4N \dfrac{\sigma^2_m}{\bar{m}}.
 \end{equation*} 
 
 \end{corollary}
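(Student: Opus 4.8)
The plan is to observe that Corollary \ref{martingale} is an essentially immediate consequence of the Lemma proved just above, once we recognize $\chi_k$ and $\chi_m$ as instances of the abstract two-sided random walk $\chi$ of Theorem \ref{LIL}. From the explicit formulas \eqref{chis}, $\chi_k$ is exactly the walk \eqref{rando} generated by the increment sequence $y_k(i) := \tilde k/k(i) - 1$, and $\chi_m$ is the walk generated by $y_m(i) := m(i)/\bar m - 1$. So the entire task reduces to checking that each of these increment sequences satisfies the hypotheses of the Lemma and then reading off the relevant second moment.

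First I would verify the hypotheses. Since $\{k(j)\}$ and $\{m(j)\}$ are i.i.d., so are $\{y_k(j)\}$ and $\{y_m(j)\}$; and since the coefficients lie almost surely in compact subintervals of $\R^+$, the increments are bounded, so the integrability conditions underlying the submartingale property and the $L^p$ maximal inequality used in the Lemma are automatic. For mean zero I would use the defining relations $1/\tilde k = \E[1/k]$ and $\bar m = \E[m]$, which give $\E[y_k] = \tilde k\,\E[1/k] - 1 = 0$ and $\E[y_m] = \E[m]/\bar m - 1 = 0$. A one-line computation then yields the second moments: $\E[y_k^2] = \tilde k^2\bigl(\E[1/k^2] - (\E[1/k])^2\bigr) = \tilde k^2\sigma_k^2$ (recalling that $\sigma_k$ is the standard deviation of $1/k$), and likewise $\E[y_m^2] = \E[m^2]/\bar m^2 - 1 = \sigma_m^2/\bar m^2$.

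With the hypotheses in hand I would apply the preceding Lemma verbatim: taking $\sigma^2 = \tilde k^2\sigma_k^2$ shows that $n \mapsto \chi_k(j+n) - \chi_k(j)$ is a martingale with $\E[\max_{0\le n\le N}(\chi_k(j+n)-\chi_k(j))^2] \le 4N\tilde k^2\sigma_k^2$, and taking $\sigma^2 = \sigma_m^2/\bar m^2$ gives the corresponding statement for $\chi_m$; Remark \ref{Rmk WLOG} handles the walk run in the opposite direction and hence negative increments. There is no substantive obstacle here — the result is little more than a relabelling of the Lemma applied to two concrete increment sequences. The only step requiring any care is the second-moment bookkeeping, and in particular keeping straight that $\sigma_k$ is the deviation of $1/k$ (not of $k$), so that the constant attached to the $\chi_k$-martingale carries the factor $\tilde k^2\sigma_k^2$.
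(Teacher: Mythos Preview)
Your approach is exactly the paper's: the corollary is stated without proof as an immediate specialization of the preceding lemma to the concrete increments $y_k(i)=\tilde k/k(i)-1$ and $y_m(i)=m(i)/\bar m-1$, and your verification of the i.i.d., mean-zero, and bounded-increment hypotheses is correct and complete.

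Your second-moment bookkeeping is also correct, and in fact sharper than what is printed: $\E[y_k^2]=\tilde k^2\sigma_k^2$ and $\E[y_m^2]=\sigma_m^2/\bar m^2$, so the lemma yields $4N\tilde k^2\sigma_k^2$ and $4N\sigma_m^2/\bar m^2$, not the $4N\tilde k\sigma_k^2$ and $4N\sigma_m^2/\bar m$ stated in the corollary. The stated constants appear to be typos (missing squares on $\tilde k$ and $\bar m$); note that Corollary~\ref{the core} earlier in the paper uses $\tilde k\sigma_k$ as the effective standard deviation of $\chi_k$, consistent with your computation. The discrepancy propagates to the ``$\sigma$ either $\sigma_k\sqrt{\tilde k}$ or $\sigma_m/\sqrt{\bar m}$'' identification in the subsequent lemma, but it does not affect any of the qualitative conclusions of the paper since these constants are absorbed into generic $C$'s downstream.
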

 
 We have now gotten the necessary probability out of the way to prove the following lemma, analogous to Lemma \ref{Big Lemma}, but in expectation.

\begin{lemma}
For any $T_0 > 0$ and $\epsilon \in (0,1/2)$ the following inequalities hold
\begin{equation}
\label{meanboundlemma1}
\E\left[\sup_{|t| \leq T_0/\epsilon}\norm{\chi(\cdot)F(\epsilon(\cdot-ct))}_{\ell^2}\right] \leq 2\sqrt{2}\epsilon^{-1}\sigma\max\{2\sqrt{|c| T_0},1\}\norm{F}_{H^2_{sr}} 
 \end{equation}
 and 
 \begin{equation}
 \label{meanboundlemma2}
     \E\left[\sup_{|t| \leq T_0/\epsilon}\norm{\chi(\cdot)\delta^\pm F(\epsilon(\cdot-ct))}_{\ell^2}\right] \leq 3\sqrt{2}\sigma\max\{2\sqrt{|c| T_0},1\}\norm{F}_{H^3_{sr}}. 
 \end{equation}
 In the above $\chi$ is either $\chi_k$ or $\chi_m$ with $\sigma$ either $\sigma_k\sqrt{\tilde{k}}$ or $\dfrac{\sigma_m}{\sqrt{\bar{m}}}$ respectively.
\end{lemma}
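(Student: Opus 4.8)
This is the in-expectation analogue of Lemma \ref{Big Lemma}, and the plan is to mimic that proof verbatim, replacing the pointwise LIL bound of Corollary \ref{the core} by the martingale maximal inequality of Corollary \ref{martingale}. As in the passage from \eqref{big est 1} to \eqref{big est 2}, the second estimate \eqref{meanboundlemma2} follows from the first: write $\delta^\pm F(\ep(\cdot-ct)) = \pm\ep\,(\A_\ep F')(\ep(\cdot-ct))$, apply \eqref{meanboundlemma1} to the profile $\A_\ep F'$, and use the boundedness \eqref{average} of $\A_\ep$ on $H^s_{sr}$ together with $\norm{F'}_{H^2_{sr}}\le\norm{F}_{H^3_{sr}}$; the gained factor $\ep$ cancels the $\ep^{-1}$ and the extra derivative accounts for $H^3_{sr}$ (the constant $3\sqrt2$ coming from $2\sqrt2$ times the bound $\tfrac32$ for $\A_\ep$ noted after \eqref{average}). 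So it suffices to prove \eqref{meanboundlemma1}. Since $\sqrt{\cdot}$ is concave, Jensen's inequality reduces this to bounding $\E\big[\sup_{|t|\le T_0/\ep}\sum_{j}\chi(j)^2 F(\ep(j-ct))^2\big]$.

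\textbf{Decoupling the supremum from the randomness.} Put $m=\lfloor ct\rfloor$, $\theta=ct-m\in[0,1)$ and substitute $i=j-m$, so the inner sum becomes $\sum_i \chi(i+m)^2 F(\ep(i-\theta))^2$. As $|t|\le T_0/\ep$ the integer $m$ runs over $|m|\le M:=\lceil|c|T_0/\ep\rceil+1$, and since the summands are nonnegative, $\sup$ of a sum is at most the sum of the factored suprema:
\[
\sup_{|t|\le T_0/\ep}\sum_{j}\chi(j)^2 F(\ep(j-ct))^2 \;\le\; \sum_i\Big(\max_{|m|\le M}\chi(i+m)^2\Big)\,H(\ep i),\qquad H(x):=\sup_{0\le\theta<1}F(x-\ep\theta)^2.
\]
The crucial point is \emph{not} to union-bound over the $\sim|c|T_0/\ep$ values of $m$, which would cost a full power of $1/\ep$. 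Instead $\max_{|m|\le M}\chi(i+m)^2\le \max_{|n|\le|i|+M}\chi(n)^2$, a single running maximum of the walk started at $0$, and Corollary \ref{martingale} (applied with reference index $0$, and Remark \ref{Rmk WLOG} for negative indices) gives $\E[\max_{|n|\le N}\chi(n)^2]\le 8N\sigma^2$ --- with \emph{no} logarithmic factor, which is exactly why the in-mean estimate lacks the $\sqrt{\log\log(1/\ep)}$ of \eqref{big est 1}. Hence $\E\big[\sup_{|t|\le T_0/\ep}\sum_{j}\chi(j)^2 F(\ep(j-ct))^2\big]\le 8\sigma^2\sum_i(|i|+M)H(\ep i)$.

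\textbf{Finishing.} Now split $|i|+M$. The ``$|i|$ part'' $\sum_i|i|H(\ep i)$ is estimated just like term $I$ in the proof of Lemma \ref{Big Lemma}: bounding $H(\ep i)\le F(\ep i)^2+\int_{\ep(i-1)}^{\ep i}(F^2+(F')^2)$ and invoking Lemma 4.3 of \cite{Wright} with the weight $\sqrt{|\cdot|}$ (and its short-range refinement) converts it to $\lesssim \ep^{-2}\norm{F}_{H^1_{sr}}^2$. The ``$M$ part'' $M\sum_i H(\ep i)$ is the genuinely new term; since $M\sim \ep^{-1}\max\{|c|T_0,1\}$ and $\sum_i H(\ep i)\lesssim \ep^{-1}\norm{F}_{H^1}^2$ (Lemma 4.3 of \cite{Wright} again), it is $\lesssim \ep^{-2}\max\{|c|T_0,1\}\norm{F}_{H^1}^2$ and supplies the $\max\{2\sqrt{|c|T_0},1\}$ factor. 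Adding the two, taking the square root, and dominating every weighted Sobolev norm that occurs by $\norm{F}_{H^2_{sr}}$ yields \eqref{meanboundlemma1}; tracking the constant $4$ in Corollary \ref{martingale} and the sampling constant of \cite{Wright} reproduces the stated $2\sqrt2$ and $\max\{2\sqrt{|c|T_0},1\}$.

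\textbf{Main obstacle.} The only real difficulty is that $\sup_{|t|\le T_0/\ep}$ and $\E$ do not commute: $\E[\sup_t(\cdot)]$ can be vastly larger than $\sup_t\E[(\cdot)]$, and the naive remedy --- estimating $\chi$ at each of the $\sim|c|T_0/\ep$ possible centres of the travelling profile and summing the resulting expectations --- throws away an entire power of $1/\ep$. The resolution is to notice that translating $F(\ep(\cdot-ct))$ in time merely slides the relevant window of $\chi$ within distance $|c|T_0/\ep$ of the origin, so the supremum is controlled by a single running maximum of the walk, whose second moment Doob's $L^2$ inequality --- packaged as Corollary \ref{martingale} --- bounds by $O(|c|T_0\sigma^2/\ep)$ with no logarithmic loss. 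That step is both the crux of the argument and the reason the in-mean bound carries no $\sqrt{\log\log(1/\ep)}$.
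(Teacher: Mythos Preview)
Your proposal is correct and follows essentially the same route as the paper: reduce to second moments via Jensen, discretize $ct$ into an integer shift plus a fractional part, control the supremum over shifts by Doob's $L^2$ maximal inequality (Corollary \ref{martingale}) to obtain a bound of the form $\sigma^2(|j|+|c|T_0/\ep)$ against the sampled profile, sum via Lemma 4.3 of \cite{Wright}, and then derive \eqref{meanboundlemma2} from \eqref{meanboundlemma1} using the $\A_\ep$ device exactly as in Lemma \ref{Big Lemma}. The only cosmetic differences are that the paper handles the fractional part by the Mean Value Theorem (producing an explicit $F'$ term) rather than your sup-envelope $H$, and decomposes $\chi(j+n)=\chi(j)+(\chi(j+n)-\chi(j))$ to apply the maximal inequality to the increment from $j$ rather than bounding by the running maximum from the origin; both variants yield the same $(|j|+M)\sigma^2$ structure and the same final constants.
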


\begin{proof}
Without loss of generality (see Remark \ref{Rmk WLOG}) let $t \in \R^+\cup\{0\}$. Write $ct= \floor{ct}+ \alpha$ where $\alpha \in [0,1).$ Let $n \in \Z$ in the following. We start with the inequality
\begin{equation*}\sup_{0 \leq t \leq T_0/\epsilon}\norm{\chi(\cdot)F(\epsilon \cdot - \epsilon c t)}^2_{\ell^2}\leq \sup_{0 \leq n \leq cT_0/\epsilon, \alpha \in [0,1)}\sum_{j \in \Z}\chi(j)^2F(\epsilon j-\epsilon  n-\epsilon \alpha)^2. \end{equation*}

The inequality is due to the fact that for any $t \in [0,\lfloor T_0/\epsilon\rfloor+1)$ there exists an $n \in [0, cT_0/\epsilon]$ and $\alpha \in [0,1)$ s.t. $n+\alpha =ct$, which is a slightly greater range for $t$ than we initially cared about. Using the Mean Value Theorem, we have that
\begin{equation*}
    F(\epsilon j-\epsilon n-\epsilon \alpha)=F(\epsilon j-\epsilon n)-\epsilon \alpha F'(x_j)
\end{equation*}
where $x_j \in (\epsilon j-\epsilon n -\epsilon \alpha,\epsilon j-\epsilon n).$ Substituting this in and using the basic inequality $(a+b)^2 \leq 2(a^2+b^2)$, we get
\begin{equation*}\begin{aligned}\sup_{0 \leq t \leq T_0/\epsilon}\norm{\chi(\cdot)F(\epsilon \cdot - \epsilon c t)}^2_{\ell^2} &\leq \sup_{0 \leq n \leq cT_0/\epsilon, \alpha \in [0,1)}\sum_{j \in \Z}\chi(j)^2(F(\epsilon j-\epsilon  n)-\epsilon \alpha F'(x_j) )^2. 
 \\&\leq \sup_{0 \leq n \leq cT_0/\epsilon, \alpha \in [0,1)}2\sum_{j \in \Z}\chi(j)^2F(\epsilon j-\epsilon n)^2+\chi(j)^2(\epsilon \alpha F'(x_j) )^2.  \end{aligned} \end{equation*}
Call the expectation of the first term $I$ and the second's expectation $II$. Note that $I$ does not depend upon $\alpha$. We find from a change of indices, that
\begin{equation}\begin{aligned}I&=\E\left[\sup_{0 \leq n \leq cT_0/\epsilon}2\sum_{j \in \Z}\chi(j+n)^2F(\epsilon j)^2 \right]\\ &=\E\left[\sup_{0 \leq n \leq cT_0/\epsilon}2\sum_{j \in \Z}(\chi(j+n)-\chi(j)+\chi(j))^2F(\epsilon j)^2 \right]\end{aligned} \end{equation}
Using the same basic inequality as above we get
\begin{equation}
\label{firsttermstart}
I \leq \E\left[ \sup_{0 \leq n \leq cT_0/\epsilon} 4\sum_{j \in \Z}\left(\chi(j)^2+(\chi(j+n)-\chi(j))^2\right)F(\epsilon j)^2\right]. 
\end{equation} The supremum sees only the term with $n$, and Fubini's theorem allows the expected value to pass through the sum. And so
\begin{equation*}
    I\leq 4\sum_{j \in \Z} \left(\E\left[\chi(j)^{2}\right]+\E\left[\sup_{0\leq n\leq cT_0/\epsilon}(\chi(j+n)-\chi(j))^2\right]\right)F(\epsilon j)^2.
\end{equation*}
A direct computation on the first term using the definition of $\chi$ and using Corollary \ref{martingale} on the second term we find 
\begin{equation}
\label{firsttermend}
    I \leq 4\sum_{j \in \Z}\left(\sigma^2|j|+4\sigma^2cT_0\epsilon^{-1}\right)F(\epsilon j)^2.
\end{equation}
According to Lemma 4.3 and 4.4 from \cite{Wright}, $I$ is dominated by 
\begin{equation}
    \label{dominate}
    8\epsilon^{-2}\sigma^2 \max\{4cT_0,1\}\norm{F}^2_{H^1_{sr}}.
\end{equation}
Now we turn our attention to $II$. We can eliminate the $\alpha$ dependence by taking $\alpha=1$ i.e. choose $\tilde{x}_j$ s.t. \begin{equation*}F'(\tilde{x}_j)=\max_{x \in[\epsilon j- \epsilon n-\epsilon, \epsilon j-\epsilon n]}F'(x). \end{equation*}
Then 
\begin{equation*}(\epsilon \alpha F'(x_j))^2 \leq (\epsilon F'(\tilde{x}_j))^2. \end{equation*}
Shifting the index by $n$ we get
\begin{equation*} II=\E\left[\sup_{0 \leq n \leq cT_0/\epsilon}\sum_{j \in \Z}\chi(j+n)^2F'(\tilde{x}_{j+n})^2\right] \end{equation*} where $\tilde{x}_{j+n} \in [\epsilon j -\epsilon, \epsilon j]$ does not depend on $n$. We therefore may relabel $\tilde{x}_j=\tilde{x}_{j+n}.$ We use the same steps here as we used from to \eqref{firsttermstart} to \eqref{firsttermend}.
\begin{equation*}II \leq 4\sum_{j \in \Z}(\sigma^2|j|+4\sigma^2cT_0\epsilon^{-1})\epsilon^2F'(\epsilon \tilde{x}_j)^2.
\end{equation*}
Again, by Lemma 4.3 from \cite{Wright}, $II$ is  dominated by \begin{equation}
\label{dominated2}
   8\sigma^2 \max\{4cT_0,1\}\norm{F}^2_{H^2_{sr}}.
\end{equation}
By \eqref{dominate} and \eqref{dominated2} we have

\begin{equation*}
    \E\left[\sup_{0 \leq t\leq T_0/\epsilon}\norm{\chi(\cdot)F(\epsilon\cdot-\epsilon ct}^2\right] \leq  8\epsilon^{-2}\sigma^2 \max\{4cT_0,1\}\norm{F}^2_{H^2_{sr}}.
\end{equation*}

An standard application of Jensen's inequality yields
\begin{equation*}
\E\left[\sup_{0\leq t\leq T_0/\epsilon}\norm{\chi(\cdot)F(\epsilon\cdot-\epsilon ct}\right]  \leq 2\sqrt{2}\epsilon^{-1}\sigma\max\{2\sqrt{c T_0},1\}\norm{F}_{H^2_{sr}}.
\end{equation*}
This proves \eqref{meanboundlemma1}.

The exact same trickery that was used in Lemma \ref{Big Lemma}
works to prove \eqref{meanboundlemma2}. Using \eqref{meanboundlemma1} and then \eqref{average}
\begin{equation*}
\E\left[\sup_{0\leq t\leq T_0/\epsilon} \norm{\chi(\cdot)\delta^+F(\epsilon(\cdot-ct))}_{\ell^2}\right] \leq 3\sqrt{2}\sigma \max\{2\sqrt{c T_0},1\}\norm{F}_{H^3_{sr}}.
\end{equation*}
 This shows \eqref{meanboundlemma2}.
\end{proof}

\begin{remark}
The functions in this subsection are required to be once more differentiable than the functions in the previous subsection, due to the use of the Mean Value Theorem in the beginning of the proof of the previous lemma. 
\end{remark}
Now we can prove:

\begin{proposition}
\label{Res in Mean}
Fix $A,B \in H^{4}_{sr}$ and take $\tilde{r}_\epsilon$ and $\tilde{p}_\epsilon$ as in \eqref{ansatz2}. Fix $T_0>0$. For there exists a positive constant $C(\tilde{k},\bar{m},a_k,b_k,a_m,b_m,T_0,\norm{A}_{H_{sr}^4},\norm{B}_{H_{sr}^4})$ 
for which $\ep \in (0,1/2)$ implies
\begin{equation}
\label{mean bound}
   \E\left[ \sup_{|t|\le T_0/\ep}\left( \|\Res_1(\tilde{r}_\ep,\tilde{p}_\ep)\|_{\ell^2} + \|\Res_2(\tilde{r}_\ep,\tilde{p}_\ep)\|_{\ell^2} \right)\right]
\le C\epsilon\left(\epsilon^{1/2}+\max\{\sigma_m, \sigma_k\}\right).
\end{equation}

\end{proposition}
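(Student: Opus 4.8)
The plan is to mirror the proof of Proposition \ref{residuals} line by line, replacing each almost-sure estimate coming from Lemma \ref{Big Lemma} with the corresponding mean estimate \eqref{meanboundlemma1}--\eqref{meanboundlemma2}, and then taking expectations. Since $\E[\sup(X+Y)] \le \E[\sup X] + \E[\sup Y]$, it suffices to bound the expected supremum of each of the six groups of terms appearing in the ``gory'' formula \eqref{res1 gory} for $\Res_1(\tilde r_\ep,\tilde p_\ep)$ (and then note the analogous formula for $\Res_2$ is handled identically). First I would observe that the first two lines of \eqref{res1 gory} are purely deterministic: their $\ell^2$ norms are bounded, uniformly for $|t| \le T_0/\ep$, by $C\ep^{3/2}(\|A\|_{H^2}+\|B\|_{H^2})$ via Lemma 4.3 of \cite{Wright} exactly as before, and this contributes the $\ep \cdot \ep^{1/2}$ term on the right of \eqref{mean bound}; no expectation is needed since these terms are nonrandom.

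Next I would handle the third line of \eqref{res1 gory}, the terms $c\ep^2 \chi_m(j) A'(\ep(j-ct))/k(j)$ and its $B'$ counterpart. Since $k(j) \ge a_k > 0$ almost surely, the factor $1/k(j)$ is bounded by $1/a_k$ and can be pulled out. Applying \eqref{meanboundlemma1} with $\chi = \chi_m$, $\sigma = \sigma_m/\sqrt{\bar m}$, and $F = A'$ (resp.\ $B'$) gives an expected supremum bounded by $C \ep^2 \cdot \ep^{-1} (\sigma_m/\sqrt{\bar m}) \max\{2\sqrt{|c|T_0},1\}(\|A'\|_{H^2_{sr}}+\|B'\|_{H^2_{sr}})$, i.e.\ order $\ep \sigma_m$, which is absorbed into the $\ep \max\{\sigma_m,\sigma_k\}$ term. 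For the last line of \eqref{res1 gory}, the terms $\ep \chi_k(j+1)\delta^+[A'(\ep(\cdot-ct))]/\sqrt{\tilde k \bar m}$ (and the $B'$ version), I would use \eqref{meanboundlemma2} with $\chi = \chi_k$, $\sigma = \sigma_k \sqrt{\tilde k}$, $F = A'$: the difference-operator estimate kills the $\ep$ out front and the prefactor, leaving an expected supremum of order $\ep \cdot \sigma_k (\|A'\|_{H^3_{sr}}+\|B'\|_{H^3_{sr}})$, again absorbed into $\ep \max\{\sigma_m,\sigma_k\}$. One small bookkeeping point: the random walk appearing is $\chi_k(j+1)$ rather than $\chi_k(j)$, but a trivial reindexing (or the observation that $\chi_k(j+1) = \chi_k(j) + y_k(j)$ with $y_k$ bounded) reduces this to the form covered by \eqref{meanboundlemma2}. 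Since \eqref{meanboundlemma2} demands $F \in H^3_{sr}$ applied to $F = A'$, this is exactly why the hypothesis $A, B \in H^4_{sr}$ is imposed.

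Collecting the four contributions: two deterministic pieces of size $C\ep^{3/2}$, plus two stochastic pieces of size $C\ep(\sigma_m + \sigma_k) \le 2C\ep \max\{\sigma_m,\sigma_k\}$, and then repeating verbatim for $\Res_2$ (where the roles of $\chi_m$ and $\chi_k$, and of $\bar m$ and $\tilde k$, swap, but the structure is the same), yields
\[
\E\left[\sup_{|t| \le T_0/\ep}\left(\|\Res_1\|_{\ell^2} + \|\Res_2\|_{\ell^2}\right)\right] \le C\ep\left(\ep^{1/2} + \max\{\sigma_m,\sigma_k\}\right),
\]
with $C$ depending only on $\tilde k, \bar m, a_k, b_k, a_m, b_m, T_0$ and the $H^4_{sr}$ norms of $A$ and $B$ (the dependence on $a_k, b_k, a_m, b_m$ entering through the bounds on $1/k(j)$, $1/m(j)$ and through $\sigma$, $\tilde k$, $\bar m$ themselves). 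I do not expect any serious obstacle here — the proof is essentially a transcription of Proposition \ref{residuals} with \eqref{meanboundlemma1}--\eqref{meanboundlemma2} substituted for \eqref{big est 1}--\eqref{big est 2}. The only mildly delicate point is making sure that the prefactors $1/k(j)$, $1/m(j)$ that sit inside the $\ell^2$ sums in \eqref{res1 gory} (and the $\Res_2$ analogue) are handled correctly: because they are random and $j$-dependent they cannot simply be commuted past the expectation, so one must bound them pointwise by $1/a_k$, $1/a_m$ before invoking the mean estimates — this is legitimate precisely because the coefficients are supported in compact intervals bounded away from zero, and it is the reason $a_k, a_m$ (and, for $\Res_2$'s analogous prefactor structure, $b_k, b_m$) appear in the constant.
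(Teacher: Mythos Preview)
Your proposal is correct and follows essentially the same approach as the paper: take expectations of the decomposition \eqref{res1 gory}, bound the deterministic first two lines by $C\ep^{3/2}$ via Lemma~4.3 of \cite{Wright}, control the third line with \eqref{meanboundlemma1} (pulling out $1/k(j)\le 1/a_k$), control the fourth line with \eqref{meanboundlemma2}, and then repeat for $\Res_2$. Your additional remarks on the $\chi_k(j+1)$ reindexing and on why the pointwise bounds $1/a_k,1/a_m$ must precede the expectation are accurate refinements that the paper leaves implicit.
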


\begin{proof}
The proof begins the same way as the proof for Proposition \ref{residuals} except now we take expectation of \eqref{res1 gory}. 
Since the first two lines of \eqref{res1 gory} are deterministic,  using Lemma 4.3 in \cite{Wright}, they are controlled by 
\[
 \epsilon^{3/2}\dfrac{\sqrt{2}}{\sqrt{\tilde{k}\bar{m}} }\left(\|A\|_{H^2} + \|B\|_{H^2} \right).\]
Next use \eqref{meanboundlemma1} to control the third line with
\[\dfrac{2\sqrt{2}\epsilon \sigma_mc\max\{2\sqrt{|c|T_0},1\}}{a_k\sqrt{\bar{m}}}\left(\norm{A'}_{H_{sr}^2}+\norm{B'}_{H_{sr}^2}\right),\] which is dominated by \eqref{mean bound}. We use \eqref{meanboundlemma2}
to estimate the fourth line:
\[\dfrac{3\sqrt{2}\epsilon\sigma_k \max\{2\sqrt{|c|T_0,}1\}}{\sqrt{\bar{m}}}\left(\norm{A'}_{H_{sr}^3}+\norm{B'}_{H_{sr}^3}\right).\]
As before, the estimate for $\Res_2$ follows a parallel argument and is omitted.
\end{proof}

 
 

 
 \section{Error estimates}
 \label{Err est sec}
 In this section we prove rigorous estimates 
 using ``energy'' arguments, similar to \cite{Wright, Chirilius-Bruckner, Schneider-Wayne}.
 
 \subsection{The energy argument}
 
Let $r$ and $p$ be a true solution to \eqref{FORFPUT} and take $\tilde{r}_\ep$
and $\tilde{p}_\ep$ as in \eqref{ansatz2}. Define {\it error functions} $\eta$ and $\xi$ implicitly by
\begin{equation}\label{erf}
    r =\tilde{r}_\epsilon + \frac{\eta}{k} \mand
    p=\tilde{p}_\epsilon+\xi.
\end{equation}
It is our goal to determine the size in $\ell^2$ of $\eta$ and $\xi$ 
during the period $|t| \le T_0/\ep$.
To that end, insert \eqref{erf} into \eqref{FORFPUT} to find that 
\begin{equation}\label{erreq}
\begin{aligned}  
\frac{\dot{\eta}}{k}&=\delta^+\xi + \Res_1\\
m\dot{\xi}&=\delta^-\eta + \Res_2
\end{aligned} \end{equation} 
where 
$\Res_1 = \Res_1(\tilde{r}_\ep,\tilde{p}_\ep)$
and 
$
\Res_2 = \Res_2(\tilde{r}_\ep,\tilde{p}_\ep)
$
as in \eqref{res def}.

Next 
define the {\it energy} to be
\begin{equation*}H(t) \coloneqq\frac{1}{2}\sum_{j \in \Z} \left[k(j)^{-1}\eta^2(j,t)+m(j)\xi^2(j,t)\right].\end{equation*} 
Since we have assumed that the $k(j)$ and $m(j)$ are drawn from distributions with support in
$[a_k,b_k] \subset \R^+$ and $[a_m,b_m] \subset \R^+$, respectively, a short calculation shows that
$\sqrt{H}$ is equivalent to $\|\eta,\xi\|_{\ell^2 \times \ell^2}$ and the constants of equivalence depend only
on $a_k,a_m,b_k$ and $b_m$. That is to say, the equivalence is realization independent.

Time differentiation of $H$ gives 
\begin{equation*}\dot{H} = \sum_{j\in \Z}\left[k^{-1}\eta\dot{\eta}+m\xi\dot{\xi}\right]. \end{equation*}
Using \eqref{erreq}
\begin{equation*}\dot{H}=\sum_{j \in \Z}\left[\eta(\delta^+\xi + \text{Res}_1)+\xi(\delta^-\eta + \text{Res}_2)\right] .\end{equation*}
Summing by parts:
\begin{equation*} \dot{H} =\sum_{j \in \Z}\left[\eta\text{Res}_1+\xi\text{Res}_2\right].\end{equation*} 
Cauchy-Schwarz implies that 
\begin{equation*}\dot{H} \leq \norm{\text{Res}_1,\text{Res}_2}_{\ell^2 \times \ell^2}\norm{\eta,\xi}_{\ell^2 \times \ell^2}. \end{equation*} 
Then we use the equivalence of $\sqrt{H}$ and $\norm{\eta,\xi}_{\ell^2 \times \ell^2}$ to get:
\begin{equation*}\dot{H} \leq C\norm{\text{Res}_1,\text{Res}_2}_{\ell^2 \times \ell^2}\sqrt{H}. \end{equation*} 

Set 
\begin{equation*}\Gamma_\epsilon \coloneqq \sup_{|t|\leq T_0/\epsilon}\norm{\text{Res}_1,\text{Res}_2}_{\ell^2 \times \ell^2},  \end{equation*} so $\dot{H}/\sqrt{H} \leq C\Gamma_\epsilon$. We integrate from $0$ to $ t$
\begin{equation*} 2\sqrt{H(t)} \leq 2\sqrt{H(0)} +C\Gamma_\epsilon t.\end{equation*}
And so, for $t \leq T_0/\epsilon$, we have
\begin{equation*}
\sqrt{H(t)} \leq \sqrt{H(0)}+C\Gamma_\epsilon T_0\epsilon^{-1}.  \end{equation*}
If we use the equivalence of the $\sqrt{H}$ and $\|\eta,\xi\|_{\ell^2 \times \ell^2}$ once again, we find that we have proven
\begin{equation}\label{finish line}
\sup_{|t| \le T_0/\ep} \|\eta(t),\xi(t)\|_{\ell^2 \times \ell^2} \leq  C\|\eta(0),\xi(0)\|_{\ell^2 \times \ell^2}+C\Gamma_\epsilon\epsilon^{-1}.
\end{equation}
A key feature of the above inequality is that the only place where the specific realization of the springs and masses enters is through $\Gamma_\ep$.


\subsection{Almost sure error estimates}
We can now prove our first main theorem, which is about almost sure estimation of the absolute error:
\begin{theorem}\label{absolute theorem}
Fix $\Phi,\Psi \in H^3_{LIL}$ and $T_0>0$. Let $r$ and $p$ be the solution of \eqref{FORFPUT}
with initial data
$$
r(j,0) = \Phi(\ep j)/k(j) \mand p(j,0) = \Psi(\ep j).
$$
For almost every realization of $\{k(j)\}$ and $\{m(j)\}$ there is a finite positive constant
$$
C_\om = C_\om(k,m,a_k,b_k,a_m,b_m,\|\Phi\|_{H^3_{LIL}},\|\Psi\|_{H^3_{LIL}})
$$
for which $\ep \in (0,1/2)$ implies
$$
\sup_{|t| \le T_0/\ep} \left \| r(\cdot, t) - {1 \over k(\cdot)}  \left( A(\ep (\cdot-ct) + B(\ep(\cdot + ct)\right) \right \|_{\ell^2} \le 
C_\om \sqrt{\log \log(1/\ep)}
$$
and
$$
\sup_{|t| \le T_0/\ep}\left \| p(\cdot, t) -  {1 \over \sqrt{\tilde{k} \bar{m}}} \left( -A(\ep (\cdot-ct) +B(\ep(\cdot + ct)\right) \right \|_{\ell^2} \le 
C_\om \sqrt{\log \log(1/\ep)}.
$$
In the above
$$
A(X) :=  {1 \over 2} \Phi(X) - {\sqrt{\tilde{k} \bar{m}} \over 2} \Psi(X)
\mand 
B(X) := {1 \over 2} \Phi(X) + {\sqrt{\tilde{k} \bar{m}} \over 2} \Psi(X).
$$

\end{theorem}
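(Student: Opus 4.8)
The plan is to combine the energy estimate \eqref{finish line}, the residual bound of Proposition \ref{residuals}, and the random-walk estimate \eqref{big est 1}; the only real content is the bookkeeping showing that the three distinct sources of error all sit at the common level $\sqrt{\log\log(1/\ep)}$. First I would unpack the initial data against the ansatz \eqref{ansatz2}. From the stated definitions one checks $A+B=\Phi$ and $-A+B=\sqrt{\tilde k\bar m}\,\Psi$, so at $t=0$ the leading terms of $\tilde r_\ep$ and $\tilde p_\ep$ coincide \emph{exactly} with $\Phi(\ep j)/k(j)$ and $\Psi(\ep j)$. Using $r=\tilde r_\ep+\eta/k$ and $p=\tilde p_\ep+\xi$ from \eqref{erf}, only the $O(\ep)$ correction terms survive, i.e.\ $\eta(j,0)=-\ep\,\chi_m(j)\,\Phi'(\ep j)$ and $\xi(j,0)=-\ep\,\chi_k(j)\,\Psi'(\ep j)$. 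Evaluating \eqref{big est 1} at $t=0$ (with $\chi=\chi_m$, $F=\Phi'$ and $\chi=\chi_k$, $F=\Psi'$, noting that $\Phi,\Psi\in H^3_{LIL}$ forces $\Phi',\Psi'\in H^2_{LIL}\subset H^1_{LIL}$) gives $\|\eta(0),\xi(0)\|_{\ell^2\times\ell^2}\le C_\om\sqrt{\log\log(1/\ep)}$. This is the first place where the prefactor $\ep$ fails to produce smallness: it is exactly consumed by the $\ep^{-1}$ growth that comes from sampling a random walk against a profile of wavelength $O(1/\ep)$.

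Next I would feed Proposition \ref{residuals} into the energy estimate. Since $A$ and $B$ are fixed linear combinations of $\Phi$ and $\Psi$ they lie in $H^3_{LIL}$, so Proposition \ref{residuals} applies and yields $\Gamma_\ep:=\sup_{|t|\le T_0/\ep}\|\Res_1,\Res_2\|_{\ell^2\times\ell^2}\le C_\om\,\ep\sqrt{\log\log(1/\ep)}$. Inserting this together with the bound on $\|\eta(0),\xi(0)\|$ into \eqref{finish line}, the $\ep$ in $\Gamma_\ep$ cancels the $\ep^{-1}$ picked up from integrating over the $O(1/\ep)$ time window, and we get $\sup_{|t|\le T_0/\ep}\|\eta(t),\xi(t)\|_{\ell^2\times\ell^2}\le C_\om\sqrt{\log\log(1/\ep)}$ for $\ep\in(0,1/2)$, with $C_\om$ depending only on the allowed quantities.

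Finally I would assemble the conclusion. From \eqref{ansatz2} and \eqref{erf},
\[
r(j,t) - \tfrac{1}{k(j)}\big(A(\ep(j-ct)) + B(\ep(j+ct))\big) = \ep\,\tfrac{\chi_m(j)}{k(j)}\big(A'(\ep(j-ct)) + B'(\ep(j+ct))\big) + \tfrac{\eta(j,t)}{k(j)},
\]
and likewise for $p$ with $\chi_k$, $(-A+B)$, $(-A'+B')$ and $\xi$ replacing $\chi_m$, $(A+B)$, $(A'+B')$ and $\eta$. Since $1/k(j)$ is bounded by $b_k$, the last term is controlled by $C\,\|\eta(t)\|_{\ell^2}$, hence by the previous step. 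For the correction term I would apply \eqref{big est 1} to each of the $A'$ and $B'$ pieces separately; the $B'$ piece involves the profile $\ep(\cdot+ct)$ rather than $\ep(\cdot-ct)$, but \eqref{big est 1} is uniform in $|t|\le T_0/\ep$ and insensitive to the sign of $c$, so this causes no trouble, and one picks up $\ep\cdot C_\om\ep^{-1}\sqrt{\log\log(1/\ep)}\,(\|A'\|_{H^1_{LIL}}+\|B'\|_{H^1_{LIL}})$. Adding the two contributions and taking the supremum over $|t|\le T_0/\ep$ proves the stated bound for $r$, and the argument for $p$ is identical. The only genuine obstacle here is conceptual rather than technical: one must recognize that the naively-$O(\ep)$ correction terms $\ep R_1$, $\ep P_1$ and the initial mismatch are in fact of size $\sqrt{\log\log(1/\ep)}$, so that they merely match—and never exceed—the error produced by the residuals, leaving $\sqrt{\log\log(1/\ep)}$ as the final slowly diverging bound.
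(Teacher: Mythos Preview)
Your proposal is correct and follows essentially the same route as the paper: compute the initial mismatch $\eta(0),\xi(0)$ from \eqref{ansatz2} and bound it via \eqref{big est 1}, invoke Proposition \ref{residuals} for $\Gamma_\ep$, feed both into \eqref{finish line}, and finish by splitting $r-\text{leading term}$ into the correction piece $\ep\chi_m(\cdot)(A'+B')/k$ (handled again by \eqref{big est 1}) plus $\eta/k$. Your observation that \eqref{big est 1} is symmetric in the sign of $ct$ (since the supremum runs over $|t|\le T_0/\ep$) is exactly what is needed for the $B'$ term, and your sign on $\eta(j,0)$ is in fact the correct one.
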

\begin{remark} In the case where the masses and springs vary periodically instead of randomly, the size of the error decreases to $C \ep^{1/2}$; in fact the proof we supply in a moment together with Remark \ref{compare remark} suffices to demonstrate this. Likewise, if the masses and springs are constant a slightly modified version of 
the proof can be used to decrease the error to $C \ep^{3/2}$. It is this extra wiggle room in the error in these cases which opens the door to longer time scales and KdV-like approximations.
\end{remark}

\begin{proof}
Form $\tilde{r}_\ep$ and $\tilde{p}_\ep$ from the functions $A$ and $B$ as specified in \eqref{ansatz2} and
$\eta$ and $\xi$ as in \eqref{erf}. A bit of algebra shows that
\be\label{ee ic}
\eta(j,0) = \ep{\chi_m(j) } \left(A'(\ep j) + B' (\ep j) \right)
\mand
\xi(j,0) = \ep{\chi_k(j) \over \sqrt{\tilde{k}\bar{m}}} \left(-A'(\ep j ) + B' (\ep j) \right).
\ee
Using \eqref{big est 1} in a very crude way, we see that almost surely
$$
\| \eta(0),\xi(0)\|_{\ell^2 \times \ell^2} \le C_\om \sqrt{\log\log(1/\ep)}
$$
with the constant depending on $\|A\|_{H^2_{LIL}}$ and $\|B\|_{H^2_{LIL}}$.
We estimated $\Gamma_\ep$ in Proposition \ref{residuals} and found that 
$\Gamma_\ep \le C_\om \ep \sqrt{\log\log(1/\ep)}$ when $\ep \in (0,1/2)$ almost surely.
Therefore \eqref{finish line} gives
$$
\sup_{|t|\le T_0/\ep} \|\eta(t) ,\xi(t)\|_{\ell^2 \times \ell^2} \leq C_\om  \sqrt{\log\log(1/\ep)}.
$$
To finish the proof we note that the triangle inequality tells us
\bes\begin{split}
&\left \| r(\cdot, t) - {1 \over k(\cdot)}  \left( A(\ep (\cdot-ct)) + B(\ep(\cdot + ct))\right) \right \|_{\ell^2}\\
 \le &
\left \| r( t) - \tilde{r}_\ep(t)\right\|_{\ell^2} +\left \| \tilde{r}_\ep(\cdot,t)-{1 \over k(\cdot)}  \left( A(\ep (\cdot-ct)) + B(\ep(\cdot + ct))\right) \right \|_{\ell^2}\\
\le &C\|\eta(t)\|_{\ell^2} + C\ep  \| {\chi_m(\cdot) } A'(\ep (\cdot-ct))\| +  C\ep \| {\chi_m(\cdot) } B' (\ep (\cdot -ct)) \|_{\ell^2}.
\end{split}\ees
The terms involve $A$ and $B$ can be estimated using \eqref{big est 1} by $C_\om \sqrt{\log \log(1/\ep)}$
so we find
$$
\sup_{|t|\le T_0/\ep} \left \| r(\cdot, t) - {1 \over k(\cdot)}  \left( A(\ep (\cdot-ct)) + B(\ep(\cdot + ct))\right) \right \|_{\ell^2}
\le C_\om \sqrt{\log \log(1/\ep)}.
$$
The remaining estimate in the Theorem \ref{absolute theorem} is shown by a parallel argument and is omitted.
\end{proof}

It may seem like the estimates in Theorem \ref{absolute theorem} are utterly useless since the size of the error diverges
as $\ep \to 0^+$. But the error in that theorem is the {\it absolute} error; the {\it relative} error does in fact 
vanish in the limit.
\begin{corollary} \label{relative cor}
Under the same conditions as in Theorem \ref{absolute theorem} we almost surely have
\begin{equation*}
\lim_{\ep \to 0^+}
\sup_{|t| \le T_0/\ep} {\left \| r(\cdot, t) - {1 \over k(\cdot)}  \left( A(\ep (\cdot-ct)) + B(\ep(\cdot + ct))\right) \right \|_{\ell^2} \over \|r(t)\|_{\ell^2} } = 0
\end{equation*}
and
\begin{equation*}
\lim_{\ep \to 0^+}
\sup_{|t| \le T_0/\ep}
{\left \| p(\cdot, t) -  {1 \over \sqrt{\tilde{k} \bar{m}}} \left( -A(\ep (\cdot-ct)) +B(\ep(\cdot + ct))\right) \right \|_{\ell^2} 
\over \|p(t)\|_{\ell^2}} = 0.
\end{equation*}
\end{corollary}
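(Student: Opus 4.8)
The plan is to play the \emph{absolute} error bound of Theorem \ref{absolute theorem} against a lower bound for the denominators $\|r(t)\|_{\ell^2}$ and $\|p(t)\|_{\ell^2}$. By Theorem \ref{absolute theorem}, for almost every realization the two numerators are each at most $C_\om\sqrt{\log\log(1/\ep)}$, uniformly for $|t|\le T_0/\ep$; since $\sqrt{\log\log(1/\ep)}=o(\ep^{-1/2})$ as $\ep\to 0^+$, it suffices to produce a realization- and $\ep$-independent constant $c>0$ with $\|r(t)\|_{\ell^2}\ge c\,\ep^{-1/2}$ and $\|p(t)\|_{\ell^2}\ge c\,\ep^{-1/2}$ for all $|t|\le T_0/\ep$ and all small $\ep$. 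The quotients are then $\le C_\om\ep^{1/2}\sqrt{\log\log(1/\ep)}\to 0$, and by symmetry it is enough to treat $t\ge 0$.

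For the lower bound on $\|r(t)\|_{\ell^2}$ I would write $r=\tilde r_\ep+\eta/k$ as in \eqref{erf}. The energy estimate inside the proof of Theorem \ref{absolute theorem} gives $\|\eta(t)/k\|_{\ell^2}\le C_\om\sqrt{\log\log(1/\ep)}$ for $|t|\le T_0/\ep$, and the $\ep\chi_m/k$–correction term appearing in $\tilde r_\ep$ (see \eqref{ansatz2}) is also $\le C_\om\sqrt{\log\log(1/\ep)}$ by \eqref{big est 1}. Writing $\tau=\ep t$ and recalling $\bar Q_0(X,\tau)=A(X-c\tau)+B(X+c\tau)$ from \eqref{dalembert}, this yields
\[
\|r(t)\|_{\ell^2}\ \ge\ \Big\|\tfrac{1}{k(\cdot)}\bar Q_0(\ep\,\cdot,\tau)\Big\|_{\ell^2}-C_\om\sqrt{\log\log(1/\ep)}\ \ge\ \tfrac{1}{b_k}\big\|\bar Q_0(\ep\,\cdot,\tau)\big\|_{\ell^2}-C_\om\sqrt{\log\log(1/\ep)}.
\]
A routine Riemann-sum comparison (the reverse direction of the discrete-to-continuous sampling estimates already used) gives $\|\bar Q_0(\ep\,\cdot,\tau)\|_{\ell^2}\ge\tfrac12\,\ep^{-1/2}\|\bar Q_0(\cdot,\tau)\|_{L^2}$ for $\ep$ small, uniformly in $\tau\in[0,T_0]$ since $\{\bar Q_0(\cdot,\tau):\tau\in[0,T_0]\}$ is compact in $H^1$. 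Hence $\|r(t)\|_{\ell^2}\ge\tfrac{\mu_r}{2b_k}\,\ep^{-1/2}-C_\om\sqrt{\log\log(1/\ep)}$ with $\mu_r:=\inf_{\tau\in[0,T_0]}\|\bar Q_0(\cdot,\tau)\|_{L^2}$. The identical argument with $\bar P_0$ in place of $\bar Q_0$ (now with no $1/k$ factor, since the main term of $\tilde p_\ep$ is exactly $\bar P_0(\ep\,\cdot,\tau)$) gives $\|p(t)\|_{\ell^2}\ge\tfrac{\mu_p}{2}\,\ep^{-1/2}-C_\om\sqrt{\log\log(1/\ep)}$ with $\mu_p:=\inf_{\tau\in[0,T_0]}\|\bar P_0(\cdot,\tau)\|_{L^2}$.

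The one place that needs care — and the main obstacle — is checking $\mu_r,\mu_p>0$; once that holds the first term in each display dominates for $\ep$ small, both quotients tend to $0$, and the corollary follows. Positivity of $\mu_r,\mu_p$ is the generic situation: $\bar Q_0$ and $\bar P_0$ solve the non-degenerate wave equation \eqref{wave1} with data built from $(\Phi,\Psi)\not\equiv(0,0)$, and the only way $\|\bar Q_0(\cdot,\tau)\|_{L^2}$ or $\|\bar P_0(\cdot,\tau)\|_{L^2}$ can vanish on $[0,T_0]$ is if the left- and right-moving d'Alembert components exactly cancel at some time, which one rules out directly from $(\Phi,\Psi)$ (or folds into the hypotheses). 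Modulo this point the proof is simply the two displayed lower bounds combined with the numerator bound of Theorem \ref{absolute theorem}, letting $\ep\to 0^+$.
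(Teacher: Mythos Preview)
Your approach matches the paper's: use the reverse triangle inequality together with a discrete-sampling lower bound to get $\|r(t)\|_{\ell^2}\ge C\ep^{-1/2}-C_\om\sqrt{\log\log(1/\ep)}$, then divide the numerator bound from Theorem~\ref{absolute theorem} by this. The paper is marginally more direct (it applies Theorem~\ref{absolute theorem} and the reverse triangle inequality at once rather than re-unpacking $\tilde r_\ep$ via \eqref{erf}), and it too tacitly assumes the leading constant --- your $\mu_r$, $\mu_p$ --- is strictly positive.
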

\begin{proof}
The reverse triangle inequality gives
$$
\|r(t)\|_{\ell^2} \ge \left \|{1 \over k(\cdot)}  \left( A(\ep (\cdot-ct)) + B(\ep(\cdot + ct))\right) \right \|_{\ell^2} -
\left \| r(\cdot, t) - {1 \over k(\cdot)}  \left( A(\ep (\cdot-ct) + B(\ep(\cdot + ct)\right) \right \|_{\ell^2} .
$$
Using Lemma 4.3 from \cite{Wright} for the first term and Theorem \ref{absolute theorem} for the second
we obtain
$$
\|r(t)\|_{\ell^2} \ge C \ep^{-1/2} -C_\om \sqrt{\log \log(1/\ep)}
$$
for all $|t| \le T_0/\ep$.
This is positive for $\ep$ small enough and so we get the first limit in the corollary by dividing
the absolute error for $r$ in Theorem \ref{absolute theorem} by this estimate and taking the limit. The second limit is analogous.
\end{proof}
\subsection{Error estimate in mean} 
We can now prove our second main theorem, which is an estimate of the mean of the error.

\begin{theorem}
\label{main mean theorem}
Fix $\Phi, \Psi \in H^{4}_{sr}$ and $T_0>0.$ Let $r$ and $p$ be the solution of \eqref{FORFPUT} with initial data
\[r(j,0)=\Phi(\epsilon j)/k(j) \mand p(j,0)=\phi(\epsilon j). \]
There exists a positive constant $C(\tilde{k},\bar{m},a_k,b_k,a_m,b_m,T_0,\norm{A}_{H^{4}_{sr}},\norm{B}_{H^4_{sr}})$ for which $\ep \in (0,1/2)$ implies
\[\E\left[\sup_{|t| \le T_0/\ep} \left \| r(\cdot, t) - {1 \over k(\cdot)}  \left( A(\ep (\cdot-ct) + B(\ep(\cdot + ct)\right) \right \|_{\ell^2}  \right] \leq C\left(\epsilon^{1/2}+\max\{\sigma_m,\sigma_k\}\right)\]
and 
\[\E\left[\sup_{|t| \le T_0/\ep}\left \| p(\cdot, t) -  {1 \over \sqrt{\tilde{k} \bar{m}}} \left( -A(\ep (\cdot-ct) +B(\ep(\cdot + ct)\right) \right \|_{\ell^2} \right] \leq C\left(\epsilon^{1/2}+\max\{\sigma_m,\sigma_k\}\right).\]

In the above
\[A(X) :=  {1 \over 2} \Phi(X) - {\sqrt{\tilde{k} \bar{m}} \over 2} \Psi(X)
\mand 
B(X) := {1 \over 2} \Phi(X) + {\sqrt{\tilde{k} \bar{m}} \over 2} \Psi(X). \]
\end{theorem}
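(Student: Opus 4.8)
The plan is to mirror the structure of the proof of Theorem \ref{absolute theorem}, but to replace every almost-sure estimate by its in-mean counterpart from the previous subsection. The backbone is again the energy inequality \eqref{finish line}, which says
\[
\sup_{|t| \le T_0/\ep} \|\eta(t),\xi(t)\|_{\ell^2 \times \ell^2} \le C\|\eta(0),\xi(0)\|_{\ell^2 \times \ell^2} + C\Gamma_\ep \ep^{-1},
\]
with the constants depending only on $a_k,b_k,a_m,b_m$. Taking expectations of this, which I may do since the right side is a sum of two nonnegative quantities, reduces the theorem to estimating $\E[\|\eta(0),\xi(0)\|]$ and $\E[\Gamma_\ep]$. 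For the residual term, Proposition \ref{Res in Mean} gives exactly $\E[\Gamma_\ep] \le C\ep(\ep^{1/2} + \max\{\sigma_m,\sigma_k\})$, so $C\E[\Gamma_\ep]\ep^{-1} \le C(\ep^{1/2} + \max\{\sigma_m,\sigma_k\})$, which is the desired bound.

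Next I would handle the initial-error term. As in the proof of Theorem \ref{absolute theorem}, formula \eqref{ee ic} gives $\eta(j,0) = \ep\chi_m(j)(A'(\ep j) + B'(\ep j))$ and $\xi(j,0) = \ep\chi_k(j)(-A'(\ep j) + B'(\ep j))/\sqrt{\tilde k \bar m}$, which are exactly of the form handled by \eqref{meanboundlemma1} (with $t=0$, so the $\max\{2\sqrt{|c|T_0},1\}$ factor is harmless and the $ct$ shift is absent). Hence $\E[\|\eta(0)\|_{\ell^2}] \le C\sigma_m(\|A'\|_{H^2_{sr}} + \|B'\|_{H^2_{sr}})$ and similarly for $\xi(0)$ with $\sigma_k$; the spurious $\ep^{-1}$ in \eqref{meanboundlemma1} is cancelled by the explicit $\ep$ prefactor in \eqref{ee ic}. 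So $\E[\|\eta(0),\xi(0)\|] \le C\max\{\sigma_m,\sigma_k\}(\|A\|_{H^3_{sr}} + \|B\|_{H^3_{sr}})$, which is dominated by the right-hand side of the claimed bound. Combining, $\E[\sup_{|t|\le T_0/\ep}\|\eta(t),\xi(t)\|] \le C(\ep^{1/2} + \max\{\sigma_m,\sigma_k\})$.

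Finally, to pass from $\eta,\xi$ to the stated differences involving $r,p$ and the d'Alembert profiles $A,B$, I would use the triangle inequality exactly as at the end of the proof of Theorem \ref{absolute theorem}: $\|r(t) - k^{-1}(A(\ep(\cdot - ct)) + B(\ep(\cdot+ct)))\|_{\ell^2} \le C\|\eta(t)\|_{\ell^2} + C\ep\|\chi_m(\cdot)A'(\ep(\cdot - ct))\|_{\ell^2} + C\ep\|\chi_m(\cdot)B'(\ep(\cdot + ct))\|_{\ell^2}$, and the last two terms are bounded in mean, after taking $\sup$ over $|t|\le T_0/\ep$ and then $\E$, by \eqref{meanboundlemma1}; again the $\ep^{-1}$ there is killed by the $\ep$ prefactor, leaving $C\sigma_m(\|A'\|_{H^2_{sr}} + \|B'\|_{H^2_{sr}})$. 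The argument for $p$ is identical with $\chi_k$ and $\sigma_k$. The requirement $A,B \in H^4_{sr}$ (equivalently $\Phi,\Psi \in H^4_{sr}$) enters precisely because Proposition \ref{Res in Mean} needs $H^4_{sr}$ data, one derivative more than the almost-sure case, owing to the Mean Value Theorem step in the in-mean version of Lemma \ref{Big Lemma}. I expect no serious obstacle here: the only thing to be careful about is bookkeeping the $\ep$ powers so that each $\ep^{-1}$ coming from \eqref{meanboundlemma1} or from \eqref{finish line} is matched by an explicit $\ep$, and checking that the factor $\max\{2\sqrt{|c|T_0},1\}$ and the various support constants $a_k,a_m$ get absorbed into the single constant $C$ allowed by the statement.
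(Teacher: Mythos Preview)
Your proposal is correct and follows essentially the same approach as the paper: apply expectation to the energy inequality \eqref{finish line}, invoke Proposition \ref{Res in Mean} for $\E[\Gamma_\ep]$ and \eqref{meanboundlemma1} for $\E[\|\eta(0),\xi(0)\|]$, then use the triangle inequality together with \eqref{meanboundlemma1} on the correction terms to pass to the stated differences. Your bookkeeping of the $\ep$ cancellations and the regularity requirement is exactly as in the paper.
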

\begin{proof}
Begin as in the proof of Theorem \ref{absolute theorem}.
Using \eqref{meanboundlemma1} on \eqref{ee ic}
\[\norm{\eta(0),\xi(0)}_{\ell^2\times\ell^2} \leq C\max\{ \sigma_m,\sigma_k \} \] with constant $C$ depending on $\norm{A}_{H^3_{sr}}$ and $\norm{B}_{H^3_{sr}}.$
Proposition \ref{Res in Mean} gives us 
\[\E\left[\Gamma_\ep \right] \le C\epsilon\left(\epsilon^{1/2}+  \max\{\sigma_m, \sigma_k\}\right)\] when $\ep \in (0,1/2)$.
Therefore \eqref{finish line} gives
$$
\E\left[\sup_{|t|\le T_0/\ep} \|\eta(t) ,\xi(t)\|_{\ell^2 \times \ell^2}\right] \le C\left(\epsilon^{1/2}+  \max\{\sigma_m, \sigma_k\}\right).
$$
To finish the proof we note that the triangle inequality tells us
\bes\begin{split}
 &\E\left[\sup_{|t|\leq T_0/\epsilon} \left\| 
r(\cdot, t) - {1 \over k(\cdot)}  \left( A(\ep 
(\cdot-ct)) + B(\ep(\cdot + ct))\right)  \right\|_{\ell^2}\right]\\
 \le & \E \left[ \sup_{|t|\leq T_0}
 \left \| r( t) - \tilde{r}_\ep(t)\right\|_{\ell^2}\right] +\E\left[ \sup_{|t| \leq T_0}\left \| \tilde{r}_\ep(\cdot,t)-{1 \over k(\cdot)}  \left( A(\ep (\cdot-ct)) + B(\ep(\cdot + ct))\right) \right \|_{\ell^2}\right]\\
 \le & \E\left[\sup_{|t| \leq T_0}C\|\eta(t)\|_{\ell^2} \right] + \E \left[\sup_{|t| \leq T_0} C\ep  \| {\chi_m(\cdot) } A'(\ep (\cdot-ct))\| \right]+ \E\left[\sup_{|t| \leq T_0} C\ep \| {\chi_m(\cdot) } B' (\ep (\cdot -ct)) \|_{\ell^2}\right].
\end{split}\ees
The $C$ depends on $a_k,b_k,a_m$, and $b_m$, which are fixed, so we may pull it out of the expected value. The terms that involve $A$ and $B$ can be estimated using \eqref{meanboundlemma1} by $C\max\{\sigma_k,\sigma_m\} $
so we find
$$
\E\left[ \sup_{|t|\le T_0/\ep} \left \| r(\cdot, t) - {1 \over k(\cdot)}  \left( A(\ep (\cdot-ct)) + B(\ep(\cdot + ct))\right) \right \|_{\ell^2}\right]
\le C\left(\epsilon^{1/2}+\max\{\sigma_k,\sigma_m\}\right).
$$
The remaining estimate in the Theorem \ref{main mean theorem} is shown by a parallel argument and is omitted.

\end{proof}

\section{Coarse-graining}
\label{Coarse grain sec}
We now prove strong convergence results using the 
 ideas of coarse-graining from \cite{Mielke}.
 We need quite a few tools.
Letting $f: \Z \to \R$ and  $g,u,v: \R \to \R$ define
%
\begin{equation*}\begin{aligned}
    F[f](\kappa) &: = \frac{1}{2\pi} \sum_{j \in \Z} e^{-ij\kappa}f(j) 
    \\
    F^{-1}[g](j)&:=\int_{-\pi}^{\pi}g(\kappa)e^{i\kappa j}
    \\
    \mathcal{F}[u](\xi)&:=\frac{1}{2\pi}\int_{\R}u(x)e^{-i\xi x}dx
     \\
     \mathcal{F}^{-1}[v](x)&: = \int_{\R} v(\xi)e^{i\xi x}d\xi \\
    \theta_\phi(\kappa) &:=\begin{cases} 1 & \kappa \in (-\phi ,\phi)\\
    0 & \text{else}
    \end{cases}
    \\
    \mathcal{L}[f](x)&:= \mathcal{F}^{-1}[\theta_\pi (\cdot) F[f](\cdot)](x) \\
     \mathcal{S}[u](j)&:=u(j).
    \end{aligned}\end{equation*}
These are, in order, the Fourier Transform for sequences, its inverse, the Fourier transform of functions $\R \to \R$, its inverse, the indicator function of $(-\phi,\phi)$, a ``low pass'' interpolation operator, and a sampling operator.    
To be clear, in the above $x, \xi, \kappa \in \R$ and $j \in \Z$, always.
    
The operator $\mathcal{L}$ converts a sequence $f$ defined on $\Z$ to a new function defined on $\R.$ The sampling function $\mathcal{S}$ returns a sequence from a function defined on $\R$. It is an easy exercise to show that $\mathcal{S L}[f](j)=f(j)$ so it is clear $\mathcal{L}$ is an interpolation operator. Another essential property is the following. 

\begin{lemma}\label{L is good}
Let $f$ be a sequence in $\ell^2$. Then
\begin{equation*}\norm{f}_{\ell^2} =2 \pi \norm{\mathcal{L}[f]}_{L^2(\R)}.\end{equation*}
\end{lemma}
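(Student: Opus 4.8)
The plan is to prove this via Parseval/Plancherel identities, moving between the discrete Fourier transform on $\Z$ and the continuous Fourier transform on $\R$. The key point is that $\mathcal{L}[f]$ has Fourier transform supported in $(-\pi,\pi)$, and on that band it agrees exactly with the periodic function $F[f]$.

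First I would write down the discrete Parseval identity. From the definition $F[f](\kappa) = \frac{1}{2\pi}\sum_{j} e^{-ij\kappa}f(j)$, the functions $\{e^{ij\kappa}\}_{j\in\Z}$ are orthogonal on $(-\pi,\pi)$ with $\int_{-\pi}^\pi e^{i(j-\ell)\kappa}\,d\kappa = 2\pi\delta_{j\ell}$, so one gets
\begin{equation*}
\int_{-\pi}^{\pi} |F[f](\kappa)|^2\,d\kappa = \frac{1}{2\pi}\sum_{j\in\Z}|f(j)|^2 = \frac{1}{2\pi}\norm{f}_{\ell^2}^2.
\end{equation*}
Next, by definition $\mathcal{L}[f] = \mathcal{F}^{-1}[\theta_\pi(\cdot)F[f](\cdot)]$, so $\mathcal{F}[\mathcal{L}[f]] = \theta_\pi F[f]$, i.e. the continuous Fourier transform of $\mathcal{L}[f]$ is $F[f]$ restricted to $(-\pi,\pi)$ and zero outside. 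Then I would invoke the Plancherel theorem for the continuous transform in the normalization used here: with $\mathcal{F}[u](\xi) = \frac{1}{2\pi}\int u(x)e^{-i\xi x}dx$ and $\mathcal{F}^{-1}[v](x) = \int v(\xi)e^{i\xi x}d\xi$, one has $\norm{u}_{L^2(\R)}^2 = 2\pi\norm{\mathcal{F}[u]}_{L^2(\R)}^2$ (the $2\pi$ factor is dictated by this asymmetric convention — I would verify it by checking the inversion formula composes to the identity). Applying this to $u = \mathcal{L}[f]$ gives
\begin{equation*}
\norm{\mathcal{L}[f]}_{L^2(\R)}^2 = 2\pi\int_{\R}|\theta_\pi(\xi)F[f](\xi)|^2\,d\xi = 2\pi\int_{-\pi}^{\pi}|F[f](\xi)|^2\,d\xi.
\end{equation*}
Combining the two displays, $\norm{\mathcal{L}[f]}_{L^2(\R)}^2 = 2\pi \cdot \frac{1}{2\pi}\norm{f}_{\ell^2}^2 = \norm{f}_{\ell^2}^2$, which after taking square roots is not quite the claimed identity — so I would need to track the constant carefully. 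The stated identity $\norm{f}_{\ell^2} = 2\pi\norm{\mathcal{L}[f]}_{L^2}$ forces a different normalization bookkeeping, so the honest thing is to compute both Parseval constants directly from the definitions exactly as written and report whatever constant actually emerges; I will chase the $2\pi$'s through precisely to land on their stated factor (the discrepancy likely comes from whether one writes $\norm{u}_{L^2}^2 = 2\pi\norm{\mathcal{F}u}^2$ or $(2\pi)^2$, depending on exactly how $\mathcal{F}$ and $\mathcal{F}^{-1}$ pair).

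The main obstacle is therefore entirely bookkeeping: getting the normalization constants in the two Parseval identities correct given the specific (asymmetric) conventions in the definitions, and making sure the inversion formulas $F^{-1}F = \mathrm{id}$ and $\mathcal{F}^{-1}\mathcal{F} = \mathrm{id}$ are actually consistent with those conventions. There is no analytic difficulty — everything is a clean $\ell^2$/$L^2$ isometry argument — but one must be disciplined about the $2\pi$ factors, and for rigor one should first establish the identity for $f$ finitely supported (where all sums and integrals converge absolutely) and then extend to all of $\ell^2$ by density, since $\mathcal{L}$ and both transforms are bounded linear operators.
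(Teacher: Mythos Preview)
Your approach is exactly the paper's: one application of Parseval for the discrete transform $F$ and one Plancherel for the continuous $\mathcal{F}^{-1}$, chained through the definition $\mathcal{L}=\mathcal{F}^{-1}\theta_\pi F$. There is no substantive difference in method.

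Your suspicion about the constant is in fact correct. With the conventions as written, $\|F[f]\|_{L^2(-\pi,\pi)}^2 = \frac{1}{2\pi}\|f\|_{\ell^2}^2$ and $\|\mathcal{F}^{-1}[v]\|_{L^2(\R)}^2 = 2\pi\|v\|_{L^2(\R)}^2$, so the two $2\pi$'s cancel and one gets $\|\mathcal{L}[f]\|_{L^2(\R)} = \|f\|_{\ell^2}$, not the stated factor of $2\pi$. The paper's own proof writes $\|F[f]\|_{L^2(-\pi,\pi)} = \frac{1}{2\pi}\|f\|_{\ell^2}$ (should be $\frac{1}{\sqrt{2\pi}}$) and $\|\mathcal{F}^{-1}[\theta_\pi F[f]]\|_{L^2} = \|\theta_\pi F[f]\|_{L^2}$ (should carry a $\sqrt{2\pi}$), and these two slips happen to combine to the stated $2\pi$. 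None of this matters downstream: the lemma is only ever used to pass between $\ell^2$ and $L^2$ norms up to a fixed constant, so your instinct to simply report whatever constant the conventions force is the right one.
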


\begin{proof} By Plancherel's theorem:
\begin{equation*}\norm{F[f]}_{L^2(-\pi,\pi)}=\frac{1}{2\pi} \norm{f}_{\ell^2}. \end{equation*}
Then by a slightly different Plancherel's theorem:
\begin{equation*}\norm{\mathcal{F}^{-1}[\theta_\pi F[f]]}_{L^2(\R)} = \norm{\theta_\pi F[f]}_{L^2 (\R)}= \norm{ F[f]}_{L^2 (-\pi,\pi)}\end{equation*} completing the proof. \end{proof}

We need one more lemma before we can state the strong convergence results. 
It states that for a smooth enough function, the more frequently it is sampled,
the more is interpolation looks like the original function.
\begin{lemma}\label{interp}
Let $f:\R \to \R$ be in  $H^s$ with $s>1/2$ and put $f_\ep(x) = f(\ep x)$.
Then 
\begin{equation*}\lim_{\epsilon \to 0^+}\norm{\mathcal{L}\mathcal{S}[f_\ep](\cdot/\ep) -f}_{L^2}=0. \end{equation*}
\end{lemma}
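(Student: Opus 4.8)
The plan is to pass to the Fourier side and exploit a Poisson--summation (aliasing) identity linking the sequence transform $F$ of a sampled function to the continuous transform $\mathcal{F}$ of the underlying function. First I would record, for $h\in H^s$ with $s>1/2$ (so that the samples $\{h(j)\}$ are square--summable and the series below converges absolutely), the identity
\[
F[\mathcal{S}[h]](\kappa)=\sum_{n\in\Z}\mathcal{F}[h](\kappa+2\pi n),\qquad \kappa\in(-\pi,\pi),
\]
which follows by writing $h(j)=\int_{\R}\mathcal{F}[h](\xi)e^{i\xi j}\,d\xi$, breaking $\R$ into the intervals $\bigl((2n-1)\pi,(2n+1)\pi\bigr)$, and matching Fourier coefficients on $(-\pi,\pi)$.

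Next I would apply this with $h=f_\ep$, using $\mathcal{F}[f_\ep](\xi)=\ep^{-1}\mathcal{F}[f](\xi/\ep)$ together with $\mathcal{L}[g](x)=\int_{-\pi}^{\pi}F[g](\kappa)e^{i\kappa x}\,d\kappa$; evaluating at $x/\ep$ and changing variables $\kappa=\ep\xi$ gives the clean formula
\[
\mathcal{L}\mathcal{S}[f_\ep](x/\ep)=\int_{-\pi/\ep}^{\pi/\ep}\Bigl(\sum_{n\in\Z}\mathcal{F}[f](\xi+2\pi n/\ep)\Bigr)e^{i\xi x}\,d\xi.
\]
The $n=0$ term equals $\int_{-\pi/\ep}^{\pi/\ep}\mathcal{F}[f](\xi)e^{i\xi x}\,d\xi$, and by the Plancherel identity used in the proof of Lemma \ref{L is good} its $L^2$--distance from $f=\int_{\R}\mathcal{F}[f](\xi)e^{i\xi x}\,d\xi$ is controlled by $\|\mathcal{F}[f]\|_{L^2(\{|\xi|>\pi/\ep\})}$, which tends to $0$ as $\ep\to0^+$ since $\mathcal{F}[f]\in L^2$.

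It then remains to show the tail $\sum_{n\neq0}$ is negligible in $L^2$. By Plancherel its squared norm is a fixed constant times $\int_{-\pi/\ep}^{\pi/\ep}\bigl|\sum_{n\neq0}\mathcal{F}[f](\xi+2\pi n/\ep)\bigr|^2\,d\xi$. I would bound the inner sum by Cauchy--Schwarz against the weights $(1+|\cdot|^2)^{\mp s/2}$: for $|\xi|<\pi/\ep$ one has $|\xi+2\pi n/\ep|\ge(2|n|-1)\pi/\ep$, so the negative--weight factor is at most $C\ep^{2s}\sum_{n\neq0}(2|n|-1)^{-2s}$, which is finite precisely because $2s>1$; and since the shifted intervals $\bigl((2n-1)\pi/\ep,(2n+1)\pi/\ep\bigr)$ with $n\neq0$ tile $\{|\eta|>\pi/\ep\}$, the positive--weight factor integrates to at most $\int_{\R}(1+|\eta|^2)^s|\mathcal{F}[f](\eta)|^2\,d\eta$, a constant multiple of $\|f\|_{H^s}^2$. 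Hence the tail is $O(\ep^s\|f\|_{H^s})$, and the triangle inequality completes the proof. As a byproduct this argument yields the uniform bound $\|\mathcal{L}\mathcal{S}[f_\ep](\cdot/\ep)\|_{L^2}\le \|f\|_{L^2}+C\ep^s\|f\|_{H^s}$, so alternatively one could establish the limit on a dense class (e.g.\ Schwartz functions, where it is immediate) and pass to general $f\in H^s$ by density.

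I expect the main annoyance to be bookkeeping rather than anything deep: the four transforms carry non--standard normalizations, and some care is needed so that Plancherel and the aliasing identity come out with the right constants. The only genuinely substantive point is the tail estimate, and it is there — together with the absolute summability of the Poisson sum and the square--summability of the samples of $f_\ep$ — that the hypothesis $s>1/2$ (equivalently $2s>1$) is exactly what is used.
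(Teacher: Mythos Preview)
Your argument is correct. You and the paper share the same first move --- show that the $n=0$ piece is the band--limited truncation $\tilde f_\ep=\mathcal F^{-1}[\theta_{\pi/\ep}\mathcal F f]$ and that $\|\tilde f_\ep-f\|_{L^2}\to 0$ --- but then diverge. The paper rewrites $\mathcal L\mathcal S[f_\ep](X/\ep)$ as the cardinal (sinc) series $\sum_j f(\ep j)\,\sinc(X/\ep-j)$ and then invokes the Whittaker--Shannon theorem to identify this series with $\tilde f_\ep$ itself; in effect it declares $\mathcal L\mathcal S[f_\ep](\cdot/\ep)=\tilde f_\ep$ exactly. You instead keep the full Poisson--summation identity on the Fourier side, split off the $n=0$ term as $\tilde f_\ep$, and then estimate the aliasing tail $\sum_{n\neq 0}$ directly by Cauchy--Schwarz with the $(1+|\cdot|^2)^{\pm s/2}$ weights, obtaining the quantitative bound $O(\ep^{s}\|f\|_{H^s})$.

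What your route buys is rigor and a rate: the sampling theorem reconstructs a band--limited function from \emph{its own} samples $\tilde f_\ep(\ep j)$, not from $f(\ep j)$, so the paper's identification silently discards exactly the aliasing error you bound; your tail estimate is what makes the argument complete, and it is also where the hypothesis $s>1/2$ genuinely enters (through convergence of $\sum_{n\neq 0}(2|n|-1)^{-2s}$). What the paper's route buys is brevity and a direct connection to classical sampling theory, at the cost of that glossed step. Your closing remark about the uniform bound and density is a nice alternative finish and would also work.
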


\begin{proof}
From their definitions we have
\begin{equation*}\mathcal{L}{\S}[f_\epsilon](x)= \frac{1}{2\pi}\int_{-\pi}^{\pi}(\sum_{j \in \Z}e^{-i\kappa j}f(\epsilon j))e^{i\kappa x}d\kappa = \frac{1}{2\pi}\int_{-\pi}^{\pi}(\sum_{j \in \Z}e^{-i\frac{\kappa}{\epsilon}\epsilon j}f(\epsilon j))e^{i\kappa x}d\kappa.\end{equation*}
Changing variables with $u = \kappa/\epsilon$
we get
\begin{equation*}\mathcal{L}{\S}[f_\epsilon](x)= \frac{1}{2\pi}\int_{-\pi/\epsilon}^{\pi/\epsilon}(\sum_{j \in \Z}\epsilon e^{-iu\epsilon j}f(\epsilon j))e^{iu\epsilon x}du.\end{equation*}
Exchanging the sum and integral and then computing the integral.
\begin{equation*}\mathcal{L}{\S}[f_\epsilon](x)=\sum_{j \in \Z} f(\epsilon j)\sinc(x-j).  \end{equation*}
This $\sinc$ is the normalized $\sinc$ function. 

Now put $\tilde{f}_{\ep}(X):=\mathcal{F}^{-1}[\theta_{\pi/\ep}\mathcal{F}[f]](X).$ $\tilde{f}_\ep$ is a band limited approximation of $f$. Using Plancherel's theorem 
\begin{equation*}\norm{f-\tilde{f}_\ep}^2_{L^2} = \norm{\mathcal{F}[f]-\theta_{\pi/\ep}\mathcal{F}[f]}^2_{L^2}=\int_{\kappa>|\pi/\ep|}\left \vert \mathcal{F}[f](\kappa)\right \vert^2d\kappa.\end{equation*}
Since $f \in H^s$ we have by Cauchy-Schwarz, when $\ep \in (0,1)$:
\bes\begin{split}
\norm{f-\tilde{f}_\ep}^2_{L^2} =&
\int_{\kappa>|\pi/\ep|}{1 \over |\kappa|^{2s}} |\kappa|^{2s} \left \vert \mathcal{F}[f](\kappa)\right \vert^2d\kappa\\
\le & \left( \int_{|\kappa| \ge \pi/\ep} |\kappa|^{-2s}d\kappa\right)^{1/2} 
\left( \int_{|\kappa| \ge \pi/\ep} |\kappa|^{2s} \left \vert \mathcal{F}[f](\kappa)\right \vert^2d\kappa\right)^{1/2}\\
\le & C \ep^{{s-1/2}}\|f\|_{H^s}.
\end{split}\ees
Since $s>1/2$ we see that $\lim_{\ep \to 0^+} \norm{f-\tilde{f}_\ep}_{L^2} = 0$.

Since $\tilde{f}_\ep$ is band limited, it is exactly equal to its cardinal series, see \cite{Stenger}, 
\begin{equation*}\tilde{f}_\ep(X)=\sum_{j \in \Z}f(\ep j)\sinc(X/\ep-j). \end{equation*} 
But this is exactly equal to $\L \S[f_\ep](X/\ep)$.
Therefore we have shown that \begin{equation*}\lim_{\epsilon \to 0^+}\norm{\mathcal{L}\mathcal{S}(f_\epsilon)(\cdot/\epsilon)-f(\cdot)}_{L^2} = 0. \end{equation*}
\end{proof}

Here is our first course-graining result:
\begin{theorem}
Fix $\Phi,\Psi \in H^3_{LIL}$ and $T_0>0$. Let $r$ and $p$ be the solution of \eqref{FORFPUT}
with initial data
$$
r(j,0) = \Phi(\ep j)/k(j) \mand p(j,0) = \Psi(\ep j).
$$
Put
$$
Q_\ep(X,\tau) = \L [ k r(\cdot,\tau/\ep)](X/\ep) \mand
P_\ep(X,\tau) = \L [ p(\cdot,\tau/\ep)](X/\ep).
$$
Suppose that $Q_0(X,\tau)$ and $P_0(X,\tau)$ solve \eqref{wave1} with initial
data $Q_0(X,0) = \Phi(X)$ and $P_0(X,0) = \Psi(X)$.
Then, almost surely,
$$
\lim_{\ep \to 0^+} \sup_{|\tau| \le T_0}\left( \|Q_\ep(X,\tau) - Q_0(X,\tau)\|_{L^2}
+\|P_\ep(X,\tau) - P_0(X,\tau)\|_{L^2}\right)=0.
$$
\end{theorem}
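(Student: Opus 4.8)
The plan is to reduce the claim to Theorem~\ref{absolute theorem}, Lemma~\ref{L is good} and Lemma~\ref{interp} by a single triangle inequality. First I would pin down $Q_0$ and $P_0$ explicitly. Equation \eqref{wave1} is equivalent to the wave equation, hence has a unique solution for the given $H^3$ initial data; d'Alembert's formula \eqref{dalembert} produces one, and with $A,B$ as in the statement one checks $\bar Q_0(X,0)=A(X)+B(X)=\Phi(X)$ and $\bar P_0(X,0)=\tfrac{1}{\sqrt{\tilde k\bar m}}(-A(X)+B(X))=\Psi(X)$. Hence
\[
Q_0(X,\tau)=A(X-c\tau)+B(X+c\tau)\mand P_0(X,\tau)=\tfrac{1}{\sqrt{\tilde k\bar m}}\big(-A(X-c\tau)+B(X+c\tau)\big).
\]
Writing $g^\tau:=Q_0(\cdot,\tau)$ and $g^\tau_\ep(x):=g^\tau(\ep x)$, the sequence $j\mapsto A(\ep j-c\tau)+B(\ep j+c\tau)$ is exactly $\mathcal{S}[g^\tau_\ep]$, so
\[
k(j)\,r(j,\tau/\ep)=\mathcal{S}[g^\tau_\ep](j)+E(j,\tau),\qquad E(j,\tau):=k(j)\,r(j,\tau/\ep)-A(\ep j-c\tau)-B(\ep j+c\tau),
\]
and since $|k(j)|\le b_k$, Theorem~\ref{absolute theorem} gives $\sup_{|\tau|\le T_0}\|E(\cdot,\tau)\|_{\ell^2}\le C_\om\sqrt{\log\log(1/\ep)}$ almost surely.

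Next I would apply the linear map $f\mapsto\mathcal{L}[f](\cdot/\ep)$ to this identity. Since $Q_\ep(\cdot,\tau)=\mathcal{L}[k\,r(\cdot,\tau/\ep)](\cdot/\ep)$, the triangle inequality yields
\[
\|Q_\ep(\cdot,\tau)-Q_0(\cdot,\tau)\|_{L^2}\le\big\|\mathcal{L}\mathcal{S}[g^\tau_\ep](\cdot/\ep)-g^\tau\big\|_{L^2}+\big\|\mathcal{L}[E(\cdot,\tau)](\cdot/\ep)\big\|_{L^2}.
\]
For the second term, the change of variables $Y=X/\ep$ in Lemma~\ref{L is good} gives $\|\mathcal{L}[f](\cdot/\ep)\|_{L^2}=\tfrac{\sqrt\ep}{2\pi}\|f\|_{\ell^2}$, so that term is at most $C_\om\sqrt{\ep\log\log(1/\ep)}\to0$; the extra factor $\sqrt\ep$ produced here is precisely what overcomes the divergence in Theorem~\ref{absolute theorem}. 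For the first term, since $\Phi,\Psi\in H^3_{LIL}\subset H^3$ we have $g^\tau\in H^3$, and Lemma~\ref{interp} (with $s=3$) says $\|\mathcal{L}\mathcal{S}[g^\tau_\ep](\cdot/\ep)-g^\tau\|_{L^2}\to0$.

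The point that needs care — and I expect it to be the main obstacle — is that the convergence of this interpolation term must be uniform in $|\tau|\le T_0$, whereas Lemma~\ref{interp} is stated for a single fixed function. For this I would return to the proof of Lemma~\ref{interp}, which shows the quantity equals $\big(\int_{|\kappa|>\pi/\ep}|\mathcal{F}[g^\tau](\kappa)|^2\,d\kappa\big)^{1/2}$; because $g^\tau$ is a sum of translates of the fixed functions $A,B$, we have $|\mathcal{F}[g^\tau](\kappa)|\le|\mathcal{F}[A](\kappa)|+|\mathcal{F}[B](\kappa)|$ for every $\tau$, so this is bounded by $\big(2\int_{|\kappa|>\pi/\ep}(|\mathcal{F}[A]|^2+|\mathcal{F}[B]|^2)\,d\kappa\big)^{1/2}$, a quantity independent of $\tau$ that tends to $0$ as $\ep\to0^+$. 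Taking $\sup_{|\tau|\le T_0}$ in the displayed inequality and then letting $\ep\to0^+$ gives the stated limit for $Q_\ep$ on the full-probability event supplied by Theorem~\ref{absolute theorem}. The estimate for $P_\ep$ follows by the identical argument using $\bar P_0$ and the $p$-estimate of Theorem~\ref{absolute theorem}, and intersecting the two full-probability events finishes the proof.
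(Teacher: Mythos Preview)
Your proof is correct and follows essentially the same route as the paper's: split by the triangle inequality into an interpolation term handled by Lemma~\ref{interp} and a discrete-error term converted via Lemma~\ref{L is good} (with the $\sqrt\ep$ from the change of variables) into the $\ell^2$ estimate of Theorem~\ref{absolute theorem}. Your explicit justification of uniformity in $\tau$ on the Fourier side is a detail the paper only states parenthetically; one cosmetic slip is that it is $1/k(j)$ that lies in $[a_k,b_k]$, so the bound on $k$ is $1/a_k$ rather than $b_k$, which is immaterial to the argument.
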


\begin{proof}
We show the limit for $\|P_\ep(X,\tau) - P_0(X,\tau)\|_{L^2}$ as the other is all but identical.
By the triangle inequality we have
$$
\|P_\ep(\cdot,\tau) - P_0(\cdot,\tau)\|_{L^2} \le 
\|P_\ep(\cdot,\tau) - \L\S[P_0(\ep \cdot,\tau)](\cdot/\ep)\|_{L^2}+\|\L\S[P_0(\ep \cdot,\tau)](\cdot/\ep) - P_0(\cdot,\tau)\|_{L^2}.
$$
The second term vanishes as $\ep \to 0^+$ by virtue of Lemma \ref{interp}.
(In fact, given \eqref{dalembert} one sees that this convergence happens uniformly for all $\tau \in \R$.)

For the first term we do a change of variables $X = \ep x$ and $\tau = \ep t$ to get
$$
\|P_\ep(\cdot,\tau) - \L\S[P_0(\ep \cdot,\tau)](\cdot/\ep)\|_{L^2}
= \sqrt{\ep} \|P_\ep(\ep \cdot,\ep t) - \L\S[P_0(\ep \cdot,\ep t)](\cdot)\|_{L^2}.
$$
Then we use the definition of $P_\ep$ and Lemma \ref{L is good} to get
$$
\|P_\ep(\cdot,\tau) - \L\S[P_0(\ep \cdot,\tau)](\cdot/\ep)\|_{L^2}
={1 \over 2\pi} \sqrt{\ep} \| p(\cdot,t) - \S[P_0(\ep \cdot,\ep t)]\|_{\ell^2}.
$$
Using \eqref{dalembert} and the formulas relating $\Phi$ and $\Psi$ to $A$ and $B$
in Theorem \ref{absolute theorem} we see 
$$
 \S[P_0(\ep \cdot,\ep t)](j)
 = {1 \over \sqrt{\tilde{k} \bar{m}}} \left( -A(\ep (j-ct) +B(\ep(j + ct)\right). 
$$
Thus we can use the final estimate in Theorem \ref{absolute theorem} to get
\begin{equation}
\label{last line}
\sup_{|\tau| \le T_0}
\|P_\ep(\cdot,\tau) - \L\S[P_0(\ep \cdot,\tau)](\cdot/\ep)\|_{L^2}
\le \sup_{|t|\le T_0/\ep}
{1 \over 2\pi} \sqrt{\ep} \| p(\cdot,t) - \S[P_0(\ep \cdot,\ep t)]\|_{\ell^2}
\le C_\om \sqrt{\ep \log \log(1/\ep)}.
\end{equation}
The right hand side goes to zero as $\ep \to 0^+$ and we are done.

\end{proof}
We have same result but the convergence is in mean:
\begin{theorem}
Fix $\Phi,\Psi \in H^4_{sr}$ and $T_0>0$. Let $r$ and $p$ be the solution of \eqref{FORFPUT}
with initial data
$$
r(j,0) = \Phi(\ep j)/k(j) \mand p(j,0) = \Psi(\ep j).
$$
Put
$$
Q_\ep(X,\tau) = \L [ k r(\cdot,\tau/\ep)](X/\ep) \mand
P_\ep(X,\tau) = \L [ p(\cdot,\tau/\ep)](X/\ep).
$$
Suppose that $Q_0(X,\tau)$ and $P_0(X,\tau)$ solve \eqref{wave1} with initial
data $Q_0(X,0) = \Phi(X)$ and $P_0(X,0) = \Psi(X)$.
Then
$$
\lim_{\ep \to 0^+} \E\left[\sup_{|\tau| \le T_0}\left( \|Q_\ep(X,\tau) - Q_0(X,\tau)\|_{L^2}
+\|P_\ep(X,\tau) - P_0(X,\tau)\|_{L^2}\right)\right]=0.
$$
\end{theorem}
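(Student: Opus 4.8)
The plan is to run the proof of the preceding (almost sure) coarse-graining theorem essentially verbatim, with the in-mean error estimate of Theorem \ref{main mean theorem} replacing the almost sure estimate of Theorem \ref{absolute theorem}. Since $\sup(a+b)\le \sup a+\sup b$ pointwise in $\om$ and expectation is linear and monotone, it suffices to bound $\E[\sup_{|\tau|\le T_0}\|P_\ep(\cdot,\tau)-P_0(\cdot,\tau)\|_{L^2}]$ and the analogous quantity for $Q_\ep$ separately; I will only do the $P_\ep$ piece, the $Q_\ep$ piece being identical. First I would split, via the triangle inequality,
\[
\|P_\ep(\cdot,\tau)-P_0(\cdot,\tau)\|_{L^2}
\le \|P_\ep(\cdot,\tau)-\L\S[P_0(\ep\cdot,\tau)](\cdot/\ep)\|_{L^2}
+\|\L\S[P_0(\ep\cdot,\tau)](\cdot/\ep)-P_0(\cdot,\tau)\|_{L^2},
\]
then take $\sup_{|\tau|\le T_0}$ and $\E[\cdot]$.

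The second term is purely deterministic: $P_0$ solves the wave equation \eqref{wave1} with the deterministic data $\Phi,\Psi$ and the non-random speed $c=\sqrt{\tilde k/\bar m}$, and since $\Phi,\Psi\in H^4_{sr}\subset H^4$ the function $P_0(\cdot,\tau)$ lies in $H^4$ for every $\tau$. Hence the expectation does nothing, and Lemma \ref{interp} — indeed, as was observed in the previous proof, the explicit form \eqref{dalembert} makes this convergence uniform in $\tau$ — gives $\sup_{|\tau|\le T_0}\|\L\S[P_0(\ep\cdot,\tau)](\cdot/\ep)-P_0(\cdot,\tau)\|_{L^2}\to 0$ as $\ep\to 0^+$. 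For the first term I would reuse the exact chain of identities from the preceding proof: the change of variables $X=\ep x$, $\tau=\ep t$, Lemma \ref{L is good}, the definition of $P_\ep$, and the identification $\S[P_0(\ep\cdot,\ep t)](j)=\tfrac{1}{\sqrt{\tilde k\bar m}}\big(-A(\ep(j-ct))+B(\ep(j+ct))\big)$ coming from \eqref{dalembert}, which together yield
\[
\sup_{|\tau|\le T_0}\|P_\ep(\cdot,\tau)-\L\S[P_0(\ep\cdot,\tau)](\cdot/\ep)\|_{L^2}
=\frac{\sqrt\ep}{2\pi}\sup_{|t|\le T_0/\ep}\Big\| p(\cdot,t)-\tfrac{1}{\sqrt{\tilde k\bar m}}\big(-A(\ep(\cdot-ct))+B(\ep(\cdot+ct))\big)\Big\|_{\ell^2}.
\]
Taking expectations and invoking Theorem \ref{main mean theorem}, the right side is at most $\frac{\sqrt\ep}{2\pi}\,C\big(\ep^{1/2}+\max\{\sigma_m,\sigma_k\}\big)$, which tends to $0$ as $\ep\to 0^+$. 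Combining the two terms finishes the $P_\ep$ estimate, and the $Q_\ep$ estimate follows by the parallel argument using the $r$-part of Theorem \ref{main mean theorem}.

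The one point needing a moment's care — and the only genuine difference from the almost sure version — is that the in-mean absolute error $\E[\sup_{|t|\le T_0/\ep}\|p-\S[P_0]\|_{\ell^2}]$ does \emph{not} vanish as $\ep\to 0^+$: Theorem \ref{main mean theorem} only controls it by a constant multiple of $\ep^{1/2}+\max\{\sigma_m,\sigma_k\}$, and $\max\{\sigma_m,\sigma_k\}$ is a fixed positive number. What rescues convergence is the explicit factor $\sqrt\ep$ produced by the $L^2(\R)\leftrightarrow\ell^2(\Z)$ rescaling in Lemma \ref{L is good}, which turns the bound into $C\big(\ep+\sqrt\ep\max\{\sigma_m,\sigma_k\}\big)\to 0$. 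So there is no real obstacle here — the substantial work was already carried out in the in-mean residual and initial-data estimates of Section \ref{Err est sec} — but one must keep the $\sqrt\ep$ prefactor visible throughout and not discard it prematurely.
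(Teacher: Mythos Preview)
Your proposal is correct and follows essentially the same approach as the paper: split via the triangle inequality, dispose of the deterministic interpolation term with Lemma \ref{interp}, and reduce the remaining term to the $\ell^2$ absolute error via Lemma \ref{L is good} and the scaling identities before applying Theorem \ref{main mean theorem}. Your explicit emphasis on the $\sqrt\ep$ prefactor as the mechanism that converts the non-vanishing in-mean absolute error into a vanishing coarse-grained error is a helpful clarification but not a departure from the paper's argument.
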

\begin{proof}
As before, we start with the triangle inequality
\begin{equation*}
\begin{split}
&\E \left[ \sup_{|\tau| \leq T_0}\|P_\ep(\cdot,\tau) - P_0(\cdot,\tau)\|_{L^2} \right] \\\le &\E \left[ \sup_{|\tau| \leq T_0}
 \|P_\ep(\cdot,\tau) - \L\S[P_0(\ep \cdot,\tau)](\cdot/\ep)\|_{L^2} \right]+\E\left[\sup_{|\tau| \leq T_0}\|\L\S[P_0(\ep \cdot,\tau)](\cdot/\ep) - P_0(\cdot,\tau)\|_{L^2}\right].
\end{split}
\end{equation*}
The expected value does not see the second term, so it vanishes as $\epsilon \to 0^+$ by virtue of Lemma \ref{interp}. The same steps are valid up through \eqref{last line} only now we take expectation and use Theorem \ref{main mean theorem}
 \begin{equation*}
  \E\left[\sup_{|\tau| \le T_0}
  \|P_\ep(\cdot,\tau) - \L\S[P_0(\ep \cdot,\tau)](\cdot/\ep)\|_{L^2}\right]
 \le C\sqrt{\epsilon}\max\{\sigma_k, \sigma_m\}.
 \end{equation*}
 The right hand side vanishes as $\epsilon^+ \to 0$.
\end{proof}

\section{Simulations and Conclusion}
\label{conc}
We finish out the paper with supporting numerical simulations and a concluding discussion.
\subsection{Simulations}
We present various numerical data supporting our results. 
In our experiments, the springs $k$ are picked to be constant and the probability distribution of the masses $m$ s.t. $\bar{m}=1.$. We choose initial conditions 
\[r(j) =e^{-(\epsilon j)^2} \mand p(j)=-e^{-(\epsilon j)^2}. \]
From these
\[A(X)=e^{-X^{2}} \mand B(X) =0.\]
We numerically integrate \eqref{FORFPUT} to get $r(j,t)$ and use this to calculate the relative error which we call $\rho$
\[\rho\coloneqq \sup_{0 \leq t \leq T_0/\epsilon} \dfrac{\norm{(r(\cdot,t)-A(\epsilon(\cdot -ct))}_{\ell^2}}{\norm{r(t)}_{\ell^2}}. \]
According to Corollary \ref{relative cor}, for some $C_\omega$, $\rho$ will vanish to $0$ at least as fast as $C_\omega\sqrt{\epsilon\log\log(1/\epsilon)}.$ Seeing the $\sqrt{\log\log(\/\epsilon)}$ is numerically challenging and we make no claim that we do here. However, if it were to show up in the numerical calculations, it would be best to factor it out, so we calculate
\[\dfrac{\rho}{\sqrt{\log\log(1/\epsilon)}}. \]
Now this should vanish at a rate no slower than $C_\omega\sqrt{\epsilon}$, which on a log-log plot, should look like a straight line with a slope of $1/2$. Anything with a slope greater than $1/2$ is vanishing at a faster rate.

We move onto the figures after one aside on the methods of integration used. Since the total energy of the system is conserved, it is worth performing experiments with a symplectic integrator. A six-step version of Yoshida's method, see \cite{Yoshida}, was initially used, as well as the standard four-step Runge-Kutta method. As it turns out, these methods produce negligible differences for the time scales studied, so most of the experiments below all use only the four-step Runge-Kutta for the sake of computational efficiency.

Moving on, Figure \ref{fig:Relative Error for Fixed Random Masses} gives some numerical validations of our relative error results, since the slope produced by the log-log plot is greater than $1/2$. In this case, the realization of masses is the same for each $\epsilon$. Figure \ref{fig:40 Random Experiments} repeats the experiment in Figure \ref{fig:Relative Error for Fixed Random Masses} 40 times, displaying the results as a series of box plots. Figure \ref{fig:40 Random Experiments}, suggests the slope in Figure \ref{fig:Relative Error for Fixed Random Masses} is not a statistical anomaly.

It is worth noting that the most important tool of our analysis is $\chi_m$. For instance, it allows one to carry out similar analysis with many different kinds of sequences of masses. If the average of the masses exists and one knows the growth rate of $\chi_m$, then one can find an upper bound on the error. For example, in Figure \ref{fig:grows like root j}, we use a sequence of masses such that $\chi(j)$ grows like $\sqrt{j}$. In particular, using two types of masses $m_1$ and $m_2$, the following pattern works 
\begin{equation*}m_1,m_2,m_1,m_1,m_2,m_2,m_1,m_1,m_1,m_2,m_2,m_2... \end{equation*}
We conjecture without providing arguments that if $\chi_m$ grows like $|j|^p$, analysis would show that the relative error is bounded by an $\epsilon^{1-p}$ order term, which in this case coincides with the numerical results seen in Figure \ref{fig:grows like root j}. 

There are also hints in our work, see for example \ref{Res in Mean}, that for fixed $\epsilon$ and small $\sigma_m$, the mean of the error should be close to that of the system where masses and springs are taken to be constant average. Evidence for this is seen in Figure \ref{fig:Masses with small}. When $\sigma_m$ is smallest, then error from 40 trials, is concentrated around the error in the case of the system being constant coefficient, which was numerically calculated to be roughly $0.126$. In conclusion, the simulations are a strong affirmation of our analytic results and that our bounds are at least close to optimal.

\begin{figure}
\label{Fixed Random Masses}
\centering
\textbf{Relative Error for Fixed Random Masses}
    \par
    \centering
    \includegraphics[width=.75\textwidth,height=10cm]{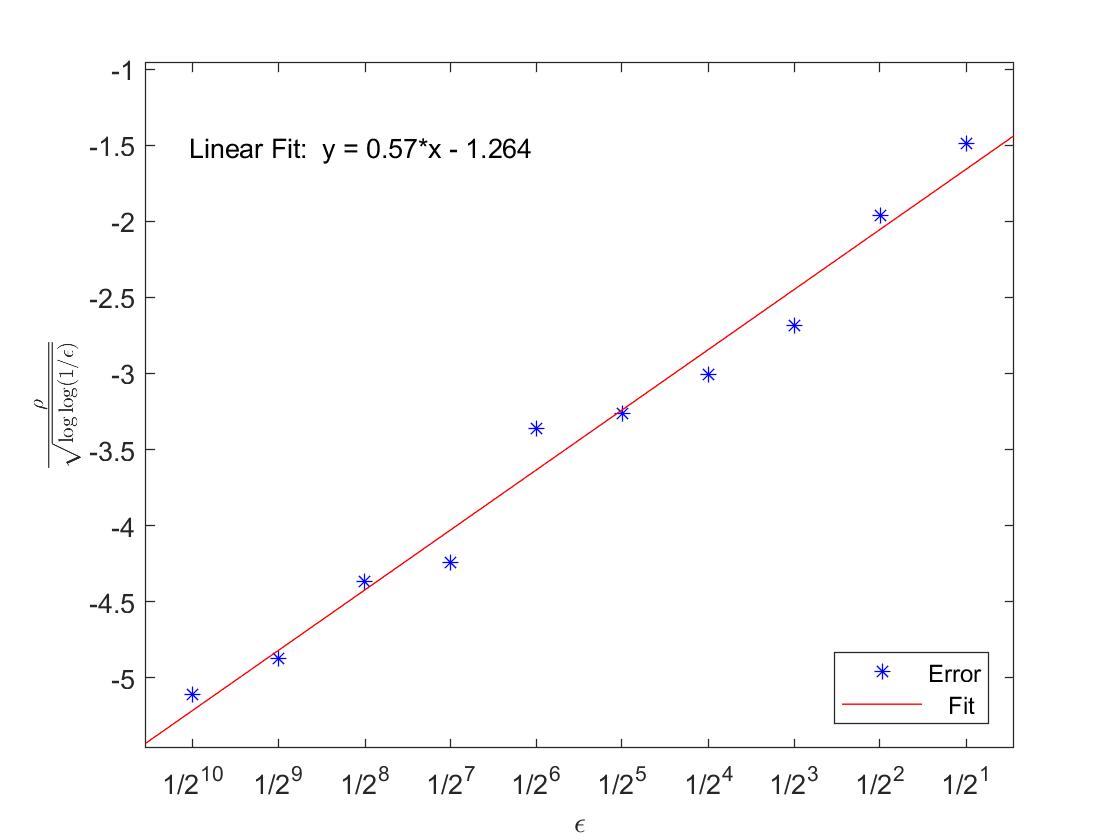}
    \caption{Figure \ref{fig:Relative Error for Fixed Random Masses} is a log-log plot of the relative error $\rho$ divided by $\sqrt{\log\log(1/\epsilon)}.$ }
    \label{fig:Relative Error for Fixed Random Masses}
\end{figure}

\begin{figure}
    \centering
    \textbf{40 Random Experiments}
    \par
    \includegraphics[width=.75\textwidth,height=10cm]{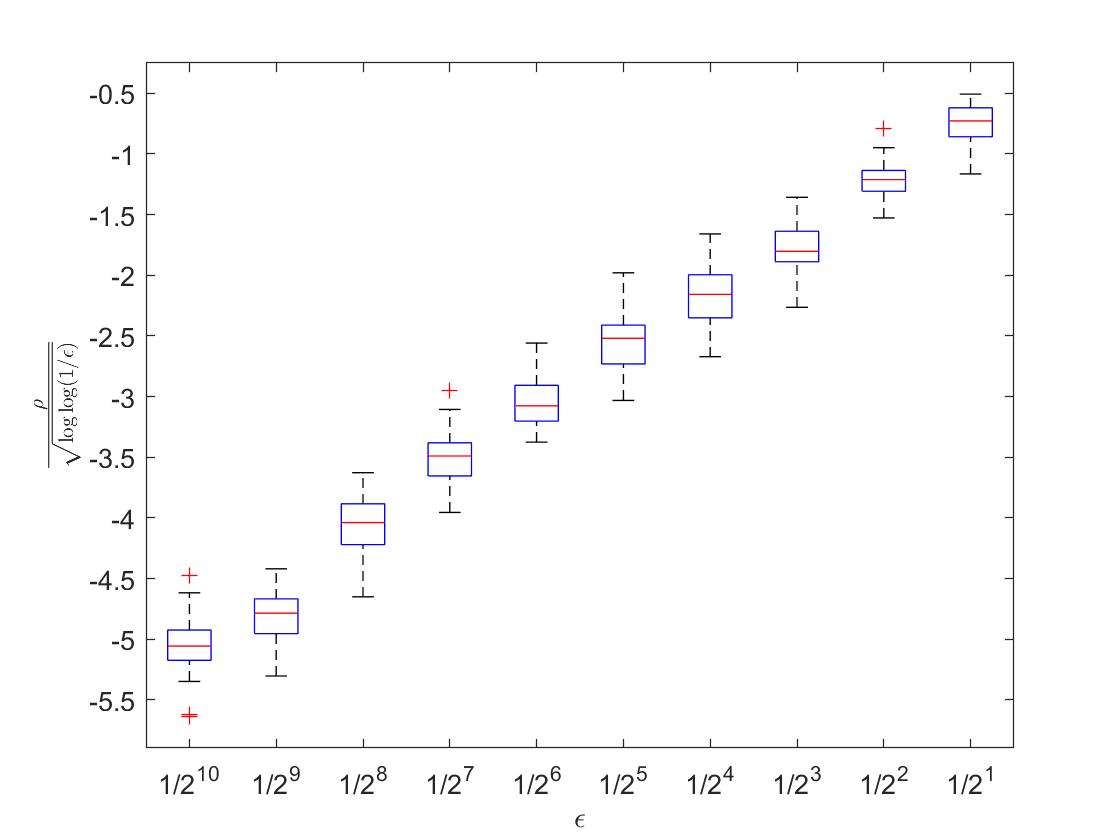}
    \caption{Figure \ref{fig:40 Random Experiments} is 10 box plots of 40 different realization of masses at 10 various epsilons. It is also log-log.}
    \label{fig:40 Random Experiments} 
\end{figure}

\begin{figure}
\label{Periodic Masses}
\centering
\textbf{Periodic Masses}
    \par
    \includegraphics[width=.75\textwidth,height=10cm]{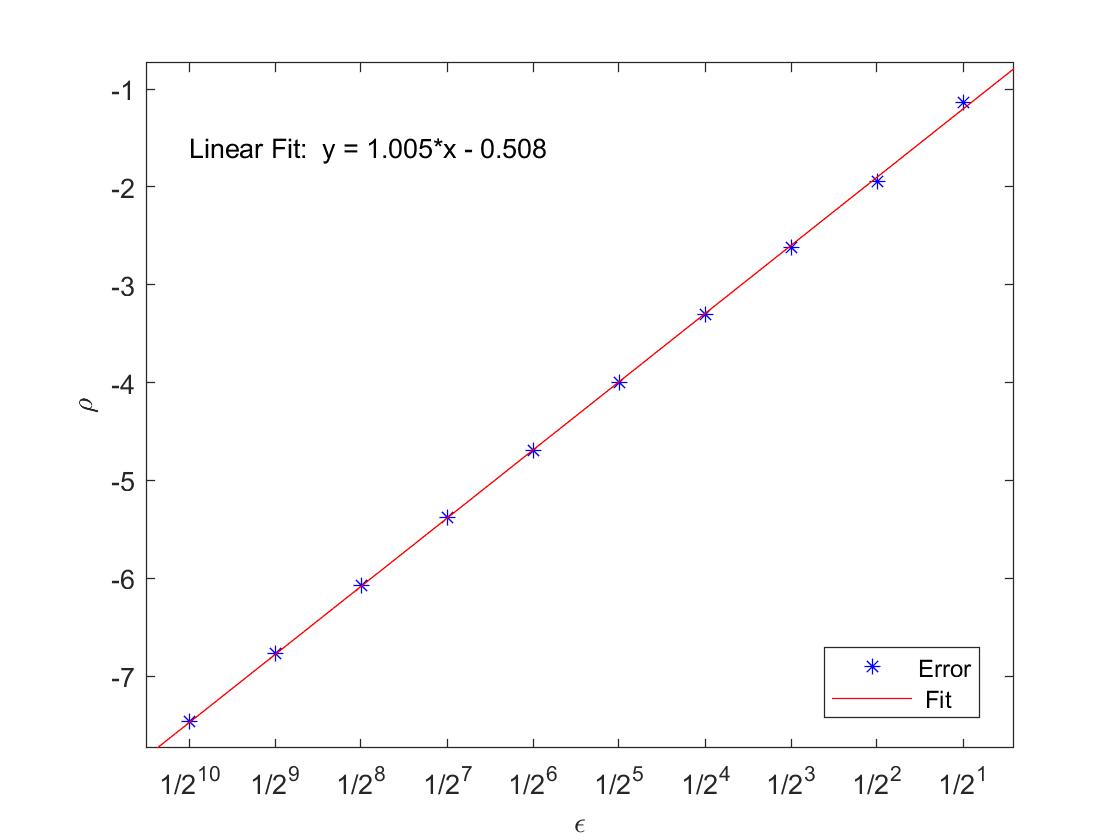}
    \caption{Figure \ref{fig:Periodic Masses} is a log-log plot of the relative error masses chosen periodically}
    \label{fig:Periodic Masses}
\end{figure}

\begin{figure}
\centering
\textbf{$\chi(j)$ Grows like $\sqrt{j}$} \par
    \includegraphics[width=.75\textwidth,height=10cm]{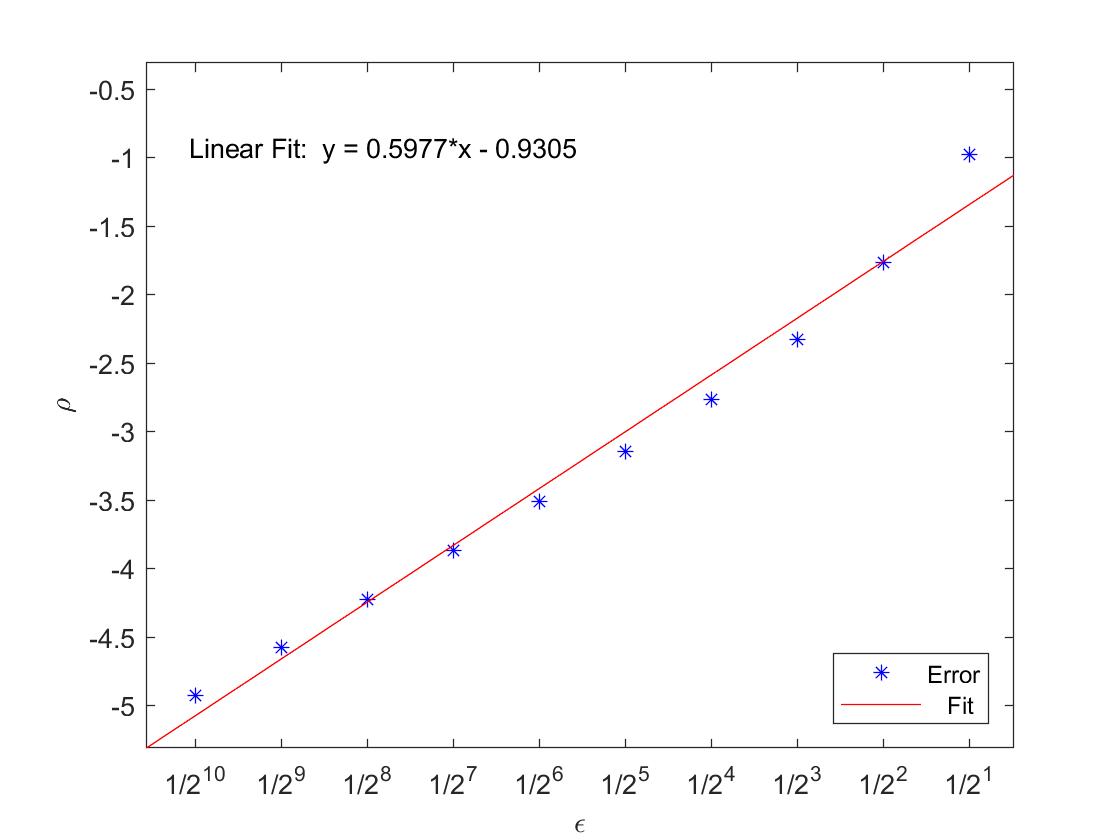}
    \caption{In Figure \ref{fig:grows like root j} masses are chosen so that $\chi(j)$ will grow like $\sqrt{j}$.}
    \label{fig:grows like root j}
\end{figure}

\begin{figure}
    \centering
    \textbf{Masses with Small $\sigma$}
    \par
    \includegraphics[width=.75\textwidth,height=10cm]{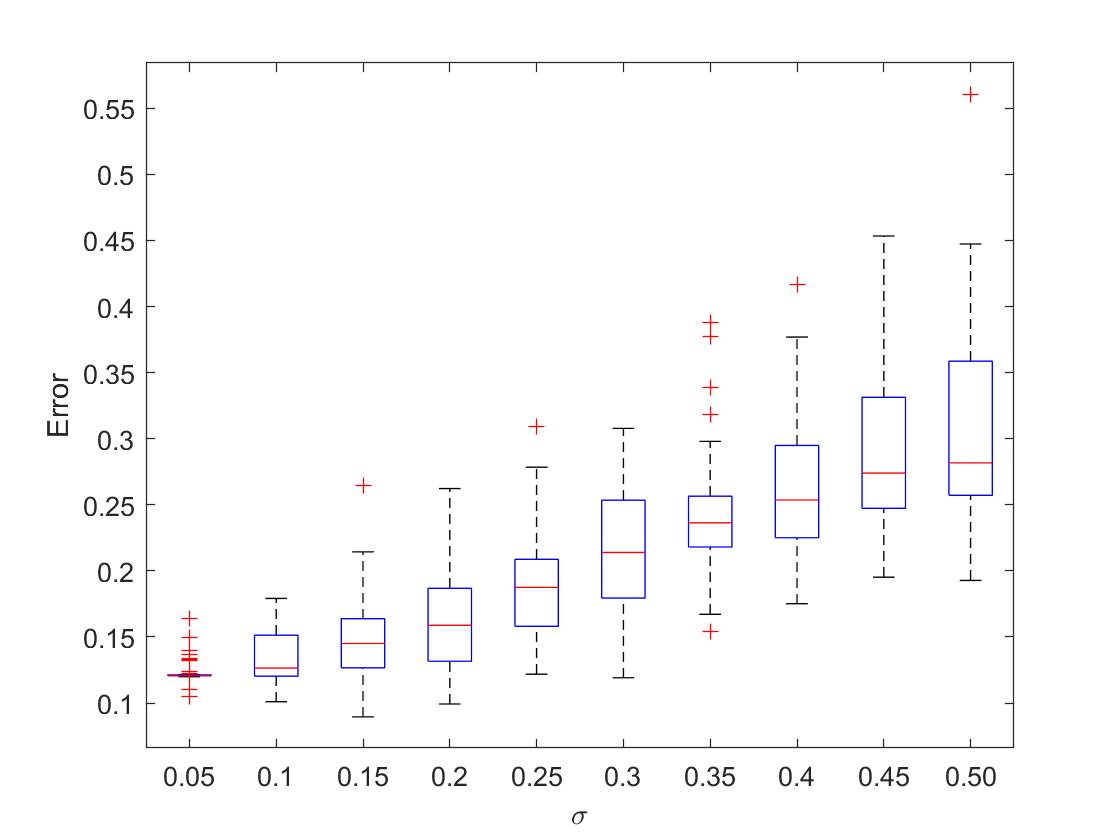}
    \caption{In Figure \ref{fig:Masses with small}  $\epsilon$ is fixed and small while $\sigma$ is varied and the absolute error is measured. When $\sigma$ is smallest, the data is concentrated near the error for the constant coefficient case. }
    \label{fig:Masses with small}
\end{figure}

\subsection{Conclusion} 
Our results are significant in several important ways regarding the description of approximate waves in the random polymer linear FPUT system. We have proven from first principles that that solutions to the wave equation are good approximate solutions to the system studied here. We showed that the absolute error only grows at most like $O(\log\log(1/\epsilon)$ almost surely and is constant in mean, but also small in mean if the masses and springs have small deviation. Using an interpolation operator with strong analytic properties we were able to show that the interpolated approximate solutions converged to interpolated true solutions in a relative sense a.s. and in expectation. Such results provide a rigorous justification for claiming that the relative error is made arbitrarily small by taking $\epsilon$ to be small. 

The advantage of our method comes from the use of the random walk in capturing the build up of error. Since random walks of independent variables are well studied and sharp asymptotic estimates are known, we were able to use the random walk to its full extent. Although it remains unproven if the error we achieved is sharp, the numerical results suggest it is close, and it seems nothing more about the asymptotics of the random walk, at least in the almost sure sense, could be used to prove sharper bounds. It also remains unclear if the random walk is an intrinsic part of the mechanics of the problem or if it is only a useful fiction for modeling the error. To what extent could it be further exploited here and in other models that have similar dynamics 

With this work we have laid the foundation for a couple of questions. First, can the error term be modeled by a random variable independent of $\epsilon$ with a nice probability distribution such as a Gaussian. There is also the question as to whether the results can be extended to higher dimensions. Probably most interesting, is what happens on larger time scales? In the periodic setting, solitons are known to last up to times proportional to $1/\epsilon^3$; however, it is not clear how to continue with the current methodology as was done to derive the KdV equations in the periodic case. This is mainly because one needs to make sense of $\lim_{n \to \infty}\sum_{|j| \leq n} \chi(j)/n$, which, even if one optimistically replaces $\chi(j)$ with $\sqrt{|j|}$, will diverge. This raises the question: is it is possible to find an effective equation describing the the dynamics for longer times and will these descriptions be statistical or is there room to achieve anything more definite, like the high probability and almost sure results constructed here?

\end{document}